\newtheorem{theorem}{Theorem}[section]
\newtheorem{lemma}[theorem]{Lemma}
\theoremstyle{definition}
 \theoremstyle{remark}
\newtheorem{remark}[theorem]{Remark}
\newtheorem{corollary}[theorem]{Corollary}
 \numberwithin{equation}{section}
\begin{document}

\title[Trudinger-Moser   inequalities for  Aharonov-Bohm Magnetic    fields]{Trudinger-Moser and Hardy-Trudinger-Moser inequalities for the Aharonov-Bohm Magnetic    field}

\author{Guozhen Lu}
\address{Department of Mathematics,   University of Connecticut, Storrs, CT 06290, USA}

\email{guozhen.lu@uconn.edu}

\author{Qiaohua  Yang}
\address{School of Mathematics and Statistics, Wuhan University, Wuhan, 430072, People's Republic of China}

\email{qhyang.math@whu.edu.cn}

\thanks{ The first author was partly supported by a Simons collaboration grant and a Simons Fellowship from the Simons Foundation. The second author  was partly  supported by   the National Natural
Science Foundation of China (No.12071353).  }


\subjclass[2000]{Primary  46E35; 35J20}



\keywords{Hardy inequalities; Trudinger-Moser inequalities;  Hardy-Trudinger-Moser inequalities, Aharonov-Bohm Magnetic field; Sharp constant}

\begin{abstract}
The main results of this paper concern sharp constant of  the Trudinger-Moser   inequality in $\mathbb{R}^{2}$  for  Aharonov-Bohm  magnetic fields. This is a borderline case of the Hardy type inequalities  for  Aharonov-Bohm  magnetic fields in $\mathbb{R}^2$  studied by A. Laptev and T. Weidl.  As an application, we obtain the exact asymptotic estimates on best constants of magnetic Hardy-Sobolev inequalities.
In order to achieve our goal, we introduce a new operator $T_{a}$ on the unit circle $\mathbb{S}^{1}$ and give   the   asymptotic estimates of the heat kernel $e^{tT_{a}}$ via
the Poisson summation formula. Finally,  we  show that such  Trudinger-Moser inequalities in the unit ball $\mathbb{B}^{2}$
can be improved   via  subtraction of an additional
Hardy term to derive a Hardy-Trudinger-Moser inequality.
\end{abstract}

\maketitle


\section{Introduction}
The  Hardy inequality reads that for $n\geq3$ and for each complex-valued function
$u\in C^{\infty}_{0}(\mathbb{R}^{n})$,
\begin{equation}\label{1.1a}
\int_{\mathbb{R}^{n}}|\nabla u|^{2}dx\geq
\left(\frac{n-2}{2}\right)^{2}\int_{\mathbb{R}^{n}}\frac{|u|^{2}}{|x|^{2}}dx
\end{equation}
and $\left(\frac{n-2}{2}\right)^{2}$ is sharp.

When $n=2$, the  above Hardy's inequality fails with any positive constant replacing $\left(\frac{n-2}{2}\right)^{2}$, which is zero.
However, we can still prove
some nontrivial Hardy inequalities if the singular weight $\frac{1}{\left\vert
x\right\vert ^{2}}$ is weakened by adding a logarithmic term and/or including some additional assumptions on the functions $u$.
Namely,

\[
\int\limits_{\mathbb{R}^{2}}\left\vert \nabla u\right\vert ^{2}dx\geq
C\int\limits_{\mathbb{R}^{2}}\frac{\left\vert u\right\vert ^{2}}{\left\vert
x\right\vert ^{2}\left(  1+\left\vert \ln x\right\vert ^{2}\right)  }dx\text{
if }\int\limits_{\left\vert x\right\vert =1}u\left(  x\right)  dx=0
\]
or%
\[
\int\limits_{\mathbb{R}^{2}}\left\vert \nabla u\right\vert ^{2}dx\geq
C\int\limits_{\mathbb{R}^{2}}\frac{\left\vert u\right\vert ^{2}}{\left\vert
x\right\vert ^{2}}dx\text{ if }\int\limits_{\left\vert x\right\vert
=r}u\left(  x\right)  dx=0\text{, }\forall r>0\text{.}%
\]
See, for instance, \cite{ET99, Sol94}.

However, for some magnetic forms, the Hardy
inequality in the form of \eqref{1.1a} with a positive constant becomes possible even in dimension two.
In fact, it has been shown by  Laptev and   Weidl (\cite{lw}) that
\begin{equation}\label{1.1}
\int_{\mathbb{R}^{2}}|(\nabla+i\mathbf{A}) u|^{2}dx\geq  \min_{n\in \mathbb{Z}}(n-a)^{2}\int_{\mathbb{R}^{2}}\frac{|u|^{2}}{|x|^{2}}dx,\;\;\;\;
u\in C^{\infty}_{0}(\mathbb{R}^{2}\setminus\{0\}),
\end{equation}
where $\mathbb{Z}$ is the set of integers and $\mathbf{A}$ is the  Aharonov-Bohm Magnetic potential defined by
\begin{align}\label{TheFieldA}
\mathbf{A}=\frac{a}{|x|^{2}}(x_{2},-x_{1}),\;\;a\in\mathbb{R}.
\end{align}
Furthermore, the constant $\min\limits_{n\in \mathbb{Z}}(n-a)^{2}$ is sharp. We refer to \cite{Aer14}, \cite{AFT03}, \cite{BEL15},  \cite{BLS04}, \cite{CK16}, \cite{CKLL},  \cite{FKLV20},  \cite{Kre19}, \cite{LamLu-VJM}, \cite{Sol94}  and references therein
for more work on  the $L^2$ and   $L^{p}$ $(p\neq 2)$ Hardy type inequalities with magnetic fields.

For simplicity, we set $re^{i\theta}=x_{1}+ix_{2}$ and
$\nabla_{\mathbf{A}}=\nabla+i\mathbf{A}$.
Then
\begin{align}\label{1.2}
|\nabla_{\mathbf{A}}u|^{2}=\left|\frac{\partial u}{\partial r}\right|^{2}+\frac{1}{r^{2}}
\left|\left(\frac{\partial }{\partial \theta}-ia\right)u\right|^{2}.
\end{align}
Denote by
\begin{align*}
\textrm{H}_{\mathbf{A}}(\mathbb{R}^{2})=\{\psi\in L^{2}(\mathbb{R}^{2}): \nabla_{\mathbf{A}}\psi\in L^{2}(\mathbb{R}^{2})\}.
\end{align*}
It has been shown by  Bonheure,   Dolbeault, Esteban,   Laptev and
Loss (\cite{bdel1}) that for $\lambda>-\min\limits_{n\in \mathbb{Z}}(n-a)^{2}$ and $p>2$,
there is an optimal function $\lambda\rightarrow\mu_{p}(\lambda)$ which is
monotone increasing and concave  such that the following magnetic Hardy-Sobolev inequality holds
\begin{align}\label{1.3}
\int_{\mathbb{R}^{2}}|\nabla_{\mathbf{A}}u|^{2}dx+\lambda
\int_{\mathbb{R}^{2}}\frac{|u|^{2}}{|x|^{2}}dx\geq \mu_{p}(\lambda)\left(\int_{\mathbb{R}^{2}}\frac{|u|^{p}}{|x|^{2}}dx\right)^{\frac{2}{p}}, \; u\in\textrm{H}_{\mathbf{A}}(\mathbb{R}^{2}).
\end{align}
Without loss of generality, one can  assume $0\leq a\leq \frac{1}{2}$ and thus  $\min\limits_{n\in \mathbb{Z}}(n-a)^{2}=a^{2}$. It has been shown in \cite{bdel1} that for
 $0<a<\frac{1}{2}$ and $-a^{2}<\lambda\leq\lambda_{\star}$, where $(\lambda_{\star}+a^{2})(p^{2}-4)=4(1-4a^{2})$, equality in (\ref{1.3}) is achieved by the function
\begin{align*}
u(x)=(|x|^{\alpha}+|x|^{-\alpha})^{-\frac{2}{p-2}},\; x\in\mathbb{R}^{2},\;\alpha=\frac{p-2}{2}\sqrt{\lambda+a^{2}}
\end{align*}
and the optimal constant is
\begin{align}\label{1.4}
\mu_{p}(\lambda)=\frac{p}{2}(2\pi)^{1-\frac{2}{p}}(\lambda+a^{2})^{1+\frac{2}{p}}\left(\frac{2\sqrt{\pi}
\Gamma(\frac{p}{p-2})}{(p-2)\Gamma(\frac{p}{p-2}+\frac{1}{2})}\right)^{1-\frac{2}{p}}.
\end{align}

It is not easy to see how the constant $\mu_{p}(\lambda)$ behave asymptotically as $p$ goes to infinity. We note that if $p$ goes to infinity in (\ref{1.3}),  one would expect that the appropriate function space for $u$ to belong to is the exponential class. In such a case,
it is natural to consider the Trudinger type inequality, which is known as the  borderline case of Sobolev inequality (see Trudinger \cite{t},  Yudovich \cite{Yu}, Pohozaev \cite{Po}).
In 1971, Moser  sharpened  the Trudinger inequality  in   \cite{mo} by finding the optimal constant.
\begin{theorem}[Moser]\label{theorem1.1} Let $\Omega$ be a domain with finite measure in Euclidean space $\mathbb{R}^n$, $n\ge2$. Then there exists a sharp constant $\beta_{n}=n\left(
\frac{n\pi^{\frac{n}{2}}}{\Gamma(\frac{n}{2}+1)}\right)  ^{\frac{1}{n-1}}%
$ such that
\begin{displaymath}
\frac{1}{|\Omega|}\int_{\Omega}\exp(\beta|u|^{\frac{n}{n-1}})dx\le c_0
\end{displaymath}
for any $\beta\le\beta_n$, any $u\in C^{\infty}_0(\Omega)$ with $\int_{\Omega}|\nabla u|^ndx\le1$. This constant $\beta_n$ is sharp in the sense that if $\beta>\beta_n$, then the above inequality can no longer hold with some $c_0$ independent of $u$.
\end{theorem}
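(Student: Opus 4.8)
The plan is to follow the classical route of symmetrization, a logarithmic change of variables, and a sharp one--dimensional lemma. \emph{First} I would replace $u$ by its symmetric decreasing rearrangement $u^{*}$, which lives on the ball $B_{\rho}$ with $|B_{\rho}|=|\Omega|$: equimeasurability gives $\int_{\Omega}\exp(\beta|u|^{n/(n-1)})\,dx=\int_{B_{\rho}}\exp(\beta (u^{*})^{n/(n-1)})\,dx$, while the P\'olya--Szeg\H{o} inequality gives $\int_{B_{\rho}}|\nabla u^{*}|^{n}\,dx\le\int_{\Omega}|\nabla u|^{n}\,dx\le 1$; moreover $u^{*}$ is radial, nonincreasing, and vanishes near $\partial B_{\rho}$. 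Since both sides of the inequality are invariant under dilations of $\Omega$, I may normalize $\rho=1$, i.e.\ take $\Omega=B_{1}\subset\mathbb{R}^{n}$; and since $|u|^{n/(n-1)}\ge0$ implies $\exp(\beta|u|^{n/(n-1)})\le\exp(\beta_{n}|u|^{n/(n-1)})$ for $\beta\le\beta_{n}$, it is enough to treat the critical constant $\beta=\beta_{n}$.

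\emph{Next}, for radial nonincreasing $u=u(r)\ge0$ on $B_{1}$ with $u(1)=0$, I would substitute $r=e^{-s/n}$, $s\in[0,\infty)$, and set $\varphi(s)=(n^{n-1}\omega_{n-1})^{1/n}\,u(e^{-s/n})$, where $\omega_{n-1}=|\mathbb{S}^{n-1}|$. A direct computation gives $\int_{B_{1}}|\nabla u|^{n}\,dx=\int_{0}^{\infty}|\varphi'(s)|^{n}\,ds$, and, using the explicit value $\beta_{n}=n\,\omega_{n-1}^{1/(n-1)}$,
\[
\frac{1}{|B_{1}|}\int_{B_{1}}\exp\!\bigl(\beta_{n}|u|^{\frac{n}{n-1}}\bigr)\,dx=\int_{0}^{\infty}\exp\!\bigl(\varphi(s)^{\frac{n}{n-1}}-s\bigr)\,ds .
\]
Thus the theorem reduces to the following one--dimensional statement: if $\varphi$ is absolutely continuous on $[0,\infty)$ with $\varphi(0)=0$, $\varphi$ nondecreasing, and $\int_{0}^{\infty}|\varphi'|^{n}\,ds\le1$, then $\int_{0}^{\infty}\exp(\varphi^{n/(n-1)}-s)\,ds\le C_{n}$ for a constant depending only on $n$.

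\emph{For the one--dimensional lemma} I would argue as follows. Put $q=\tfrac{n}{n-1}$ and $b(s)=\int_{0}^{s}|\varphi'|^{n}\,dt\le1$. H\"older's inequality yields $\varphi(s)\le s^{1/n'}b(s)^{1/n}$, hence the exponent $F(s):=\varphi(s)^{q}-s\le s\bigl(b(s)^{1/(n-1)}-1\bigr)\le0$, so the integrand never exceeds $1$ --- but this by itself does not give an integrable bound. The crux is the \emph{uniform} distributional estimate
\[
\bigl|\{s>0:\ F(s)>-\lambda\}\bigr|\le C_{n}(1+\lambda),\qquad\lambda>0,
\]
after which the layer--cake formula closes the argument: $\int_{0}^{\infty}e^{F}\,ds=\int_{0}^{\infty}e^{-\lambda}\,\bigl|\{F>-\lambda\}\bigr|\,d\lambda\le C_{n}\int_{0}^{\infty}(1+\lambda)e^{-\lambda}\,d\lambda$. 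To prove the distributional estimate I would split according to how much of the ``gradient budget'' has been spent: where $b(s)\le\tfrac12$ one has $F(s)\le-(1-2^{-1/(n-1)})\,s$, forcing $s\le C_{n}\lambda$; on the complementary range, with $s_{1}=\inf\{s:b(s)>\tfrac12\}$, H\"older on $[s_{1},s]$ shows $\varphi(s)\le\varphi(s_{1})+(s-s_{1})^{1/n'}\bigl(1-b(s_{1})\bigr)^{1/n}$, so the smallness of the remaining budget (iterated as $b(s)\uparrow1$) forces $F$ to decay at a uniform rate and keeps the superlevel set short. This accounting in the borderline regime $b(s)\to1$ --- the analytic manifestation of the sharpness of $\beta_{n}$ --- is the step I expect to be the main obstacle.

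\emph{Finally, for sharpness}, I would test against the Moser functions: for $k\ge2$ let $L_{k}=\log k$ and, in the variable $s$ above, take $\varphi_{k}(s)=s\,L_{k}^{-1/n}$ on $[0,L_{k}]$ and $\varphi_{k}(s)=L_{k}^{(n-1)/n}$ for $s\ge L_{k}$; in the original variable $u_{k}$ is a normalized truncation of $\log(1/|x|)$. Then $\int_{B_{1}}|\nabla u_{k}|^{n}\,dx=1$, while for $\beta=\tau\beta_{n}$ with $\tau>1$,
\[
\frac{1}{|B_{1}|}\int_{B_{1}}\exp\!\bigl(\beta|u_{k}|^{\frac{n}{n-1}}\bigr)\,dx\ \ge\ \int_{L_{k}}^{\infty}e^{\tau L_{k}-s}\,ds=e^{(\tau-1)L_{k}}=k^{\tau-1}\longrightarrow\infty ,
\]
so no finite $c_{0}$ can work when $\beta>\beta_{n}$. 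Since the $u_{k}$ are only Lipschitz, I would finish either by a small mollification placing them in $C_{0}^{\infty}(B_{1})$ without affecting the estimates, or by first extending the inequality to $W_{0}^{1,n}$ by density and then applying Fatou's lemma.
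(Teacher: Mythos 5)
This theorem is quoted in the paper as a classical background result of Moser \cite{mo}; the paper does not prove it, so there is no ``paper's proof'' to compare against. Your outline follows Moser's original route faithfully --- P\'olya--Szeg\H{o} symmetrization, normalization to the unit ball, the logarithmic substitution $r=e^{-s/n}$ with the normalization $\varphi=(n^{n-1}\omega_{n-1})^{1/n}u$ (which I checked gives exactly $\int_{B_1}|\nabla u|^n\,dx=\int_0^\infty|\varphi'|^n\,ds$ and $|B_1|^{-1}\int_{B_1}e^{\beta_n|u|^{n'}}\,dx=\int_0^\infty e^{\varphi^{n'}-s}\,ds$), and the Moser functions for sharpness. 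All of that is correct and standard.

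The genuine gap is exactly where you flag it, and it is not a side detail --- it is the entire mathematical content of the theorem. The reduction you carry out is soft and would work for \emph{any} constant in place of $\beta_n$; the fact that $4\pi$ (resp.\ $\beta_n$) is the threshold lives entirely in the one-dimensional lemma, specifically in the distributional estimate $|\{s:\varphi(s)^{n'}-s>-\lambda\}|\le C_n(1+\lambda)$. Your attempt via $b(s)=\int_0^s|\varphi'|^n$ and splitting at $b=\tfrac12$ does not close: after H\"older on $[s_1,s]$ you get $\varphi(s)\le\varphi(s_1)+(s-s_1)^{1/n'}(1-b(s_1))^{1/n}$, and when you raise this to the power $n'$ the cross terms are not absorbed by the naive argument, precisely because the ``remaining budget'' $1-b(s_1)$ can be arbitrarily close to $\tfrac12$ and the Moser functions saturate H\"older on a long interval. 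The fix (Moser's Lemma~1, or equivalently Adams's Lemma~1 from \cite{ad} specialized to first order) is to exploit the \emph{strict} subadditivity of the $\ell^{n'}$ norm of $(\sigma\alpha,\,\tau\beta)$ when $\alpha^n+\beta^n\le1$: one shows that if $F(t)>-\lambda$ and $F(t')>-\lambda$ with $t'-t$ large, then the gradient budget consumed on $[t,t']$ would have to exceed $1$, forcing $\sup\{s:F(s)>-\lambda\}-\inf\{s:F(s)>-\lambda\}\le C_n(1+\lambda)$. Without this quantitative accounting --- which is genuinely delicate at the critical constant --- the proof is incomplete, and I would not accept the sketch as a proof even though the surrounding reductions and the sharpness computation are correct.
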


We shall focus on the case $n=2$ only in this paper  and then the corresponding  optimal constant $\beta_{2}=4\pi$.

\medskip

One of the main results of this paper  is the following borderline case of the Hardy inequality \eqref{1.3} of   Bonheure,   Dolbeault, Esteban,   Laptev and
Loss (\cite{bdel1}), namely a Trudinger-Moser type inequality. Our result sharpened the integrability in \eqref{1.3}.

\begin{theorem} \label{th1.2} Let $\mathbf{A}$ be the  Aharonov-Bohm Magnetic potential defined in \eqref{TheFieldA} and
 $0\leq a\leq\frac{1}{2}$ and $\lambda>-a^{2}$. Then there exists $C>0$ such that
\begin{align*}
\int_{\mathbb{R}^{2}}\frac{e^{4\pi|u|^{2}}-1}{|x|^{2}}dx\leq C
\end{align*}
for any complex-valued function $u\in C^{\infty}_0(\mathbb{R}^{2}\setminus\{0\})$ with
\begin{align*}
\int_{\mathbb{R}^{2}}|\nabla_{\mathbf{A}}u|^{2}dx+\lambda
\int_{\mathbb{R}^{2}}\frac{|u|^{2}}{|x|^{2}}dx\leq 1.
\end{align*}
Moreover, the constant $4\pi$ is sharp.
\end{theorem}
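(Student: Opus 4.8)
The plan is to transplant everything to an infinite cylinder and then run an Adams-type argument with sharp heat-kernel estimates. First I would substitute $x = e^{-t}e^{i\theta}$, which sends $\mathbb R^2\setminus\{0\}$ to the cylinder $M=\mathbb R\times\mathbb S^1$ with product measure and, crucially, turns $\tfrac{dx}{|x|^2}$ into $dt\,d\theta$. Using \eqref{1.2} and $\partial_r=-\tfrac1r\partial_t$ one checks that $\int_{\mathbb R^2}|\nabla_{\mathbf A}u|^2\,dx=\int_M\big(|\partial_t u|^2+|(\partial_\theta-ia)u|^2\big)\,dt\,d\theta$ and $\int_{\mathbb R^2}\tfrac{|u|^2}{|x|^2}\,dx=\int_M|u|^2\,dt\,d\theta$. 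With $T_a:=-(\partial_\theta-ia)^2$ on $\mathbb S^1$ and $L:=-\partial_t^2+T_a+\lambda$ on $M$, the hypothesis becomes $\langle Lu,u\rangle_{L^2(M)}\le1$ and the assertion becomes $\int_M\big(e^{4\pi|u|^2}-1\big)\,d\mu\le C$. The eigenvalues of $T_a$ are $(n-a)^2$, $n\in\mathbb Z$, with smallest eigenvalue $a^2$ for $0\le a\le\tfrac12$; since $\lambda+a^2>0$, $L$ is bounded below by $\lambda+a^2>0$, and since $L$ has constant coefficients on $M$ its heat kernel $e^{-sL}$ and the kernel of $L^{-1/2}$ depend only on $(t-t',\theta-\theta')$. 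Positivity of $L$ also guarantees $L^{-1/2}:L^2(M)\to L^2(M)$, which is what makes $|u|^2$ integrable and the ``$-1$'' in the conclusion meaningful on the infinite-measure space $M$.

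Next I would compute the heat kernel of $T_a$. Its kernel is $p_s^a(\phi)=\tfrac1{2\pi}\sum_{n}e^{-s(n-a)^2}e^{in\phi}$, and the Poisson summation formula rewrites this as $p_s^a(\phi)=\tfrac1{\sqrt{4\pi s}}\sum_{k}e^{ia(\phi-2\pi k)}e^{-(\phi-2\pi k)^2/(4s)}$. Reading the two representations in their favourable regimes gives, for $|\phi|\le\pi$, the small-time bound $|p_s^a(\phi)|\le\tfrac1{\sqrt{4\pi s}}e^{-\phi^2/(4s)}\big(1+Ce^{-c/s}\big)$ (the $k=0$ term) and the large-time bound $|p_s^a(\phi)|\le Ce^{-a^2 s}$ for $s\ge1$ (the $n=0$ term; here $0\le a\le\tfrac12$ is used to see $n=0$ dominates). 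Multiplying by the Euclidean heat kernel in $t$ and by $e^{-\lambda s}$, then using subordination $L^{-1/2}=\tfrac1{\sqrt\pi}\int_0^\infty s^{-1/2}e^{-sL}\,ds$, I would conclude that the kernel $k(v)$ of $L^{-1/2}$ satisfies $|k(v)|\le\tfrac1{2\pi|v|}(1+o(1))$ as $|v|\to0$ (from $\tfrac1{\sqrt\pi}\int_0^\infty s^{-3/2}\tfrac1{4\pi}e^{-|v|^2/4s}\,ds=\tfrac1{2\pi|v|}$, the lower-order terms contributing only $o(1)$) and $|k(v)|\le Ce^{-\sqrt{\lambda+a^2}\,|v_1|}$ for $|v|\ge1$ (from the large-time Gaussian-times-$e^{-(\lambda+a^2)s}$ bound).

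With these estimates in hand I would finish by an Adams/O'Neil-type exponential integrability lemma for integral operators. Writing $u=L^{-1/2}f$ with $f=L^{1/2}u$, so $\|f\|_{L^2(M)}^2=\langle Lu,u\rangle\le1$, gives $|u|\le\mathcal K|f|$ where $\mathcal K$ is the integral operator with nonnegative kernel $|k|$. Because $M$ is homogeneous, the decreasing rearrangement $k^*$ of $|k(z-\cdot)|$ is independent of $z$, and the two bounds above say exactly that $k^*(\sigma)\le(4\pi\sigma)^{-1/2}(1+o(1))$ as $\sigma\to0^+$ and $\int_1^\infty k^*(\sigma)^2\,d\sigma<\infty$. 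This is precisely the critical hypothesis of the exponential integrability lemma for integral operators on measure spaces of possibly infinite measure, whose conclusion --- applied with $q=q'=2$ and $\omega=4\pi$ --- is $\int_M(e^{4\pi|u|^2}-1)\,d\mu\le C$ for all $f$ with $\|f\|_2\le1$ (first for $f$ in a dense class, then for all such $f$ by lower semicontinuity); undoing the change of variables proves the inequality.

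The main obstacle, and the place where the real work lies, is the second step: one must show that after Poisson summation the heat kernel of $L$ agrees with the flat two-dimensional heat kernel up to the exponentially small ``winding'' corrections and an $e^{-(\lambda+a^2)s}$ gain at large times, uniformly, with enough precision that the constant produced by the Adams lemma is exactly $4\pi$ and not merely $4\pi(1+\varepsilon)$. Sharpness, by contrast, is routine: transplant the classical Moser logarithmic cut-offs $m_\epsilon$ (with $\|\nabla m_\epsilon\|_{L^2(\mathbb R^2)}^2=1$ and support in a fixed small ball) to coordinates at a point $z_0$ of $M$, which is legitimate since $M$ is locally isometric to $\mathbb R^2$, and set $u_\epsilon(z)=m_\epsilon(z-z_0)$. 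For real $u_\epsilon$ one has $|(\partial_\theta-ia)u_\epsilon|^2=|\partial_\theta u_\epsilon|^2+a^2u_\epsilon^2$, so $\langle Lu_\epsilon,u_\epsilon\rangle=\|\nabla u_\epsilon\|_{L^2(M)}^2+(\lambda+a^2)\|u_\epsilon\|_{L^2(M)}^2=1+O(1/\log\tfrac1\epsilon)\to1$; hence the normalized functions saturate the constraint, while evaluating on the core $\{|z-z_0|\le\epsilon\}$, where $u_\epsilon^2=\tfrac1{2\pi}\log\tfrac1\epsilon$, gives $\int_M(e^{\beta u_\epsilon^2/\langle Lu_\epsilon,u_\epsilon\rangle}-1)\,d\mu\ge\pi\big(\epsilon^{2-\beta(1-o(1))/(2\pi)}-\epsilon^2\big)\to\infty$ whenever $\beta>4\pi$, so $4\pi$ cannot be replaced by any larger constant.
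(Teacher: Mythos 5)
Your proposal is correct, and it takes a genuinely different (and in some ways cleaner) route than the paper. The paper introduces a regularized auxiliary operator $T_a^{(\epsilon)}=-\sum_n\bigl(n^2-2a\,n^2/\sqrt{n^2+\epsilon}\bigr)P_n$ whose symbol is even in $n$ and real, so that its heat kernel is real-valued and even in $\theta$; it then establishes kernel estimates for $(-\partial_w^2-T_a^{(\epsilon)}+\lambda')^{-1/2}$ (Lemmas 4.1--4.5), proves a Trudinger--Moser inequality for the associated quadratic form (Theorem 4.6), and finally transfers the inequality to the genuine magnetic energy via a Fourier-side quadratic-form comparison (\ref{4.14}), which is where the constraint $\lambda>-a^2$ and the smallness of $\epsilon$ enter. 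You instead work directly with the true fibre operator $-(\partial_\theta-ia)^2$; its Poisson-summed heat kernel carries the complex phases $e^{ia(\phi-2\pi k)}$, which the paper's regularization was designed to eliminate, but you correctly observe that these phases are harmless because the O'Neil/Adams machinery only sees $|k|$. This lets you skip the regularization parameter, the separate treatment of $a=0$ (the paper's Section 3), the $e^{tT_a}$--$e^{t\Delta_{\mathbb S^1}}$ comparison (Lemma 4.2), and the final quadratic-form inequality (\ref{4.14}). You also get a slightly sharper near-diagonal bound: your $|k(v)|\le\frac{1}{2\pi|v|}+O(1)$ versus the paper's $\frac{1}{2\pi|v|}+O(|v|^{-\delta})$ coming from the $K_0$ logarithm in Lemma 4.4.

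Two small points to tighten. First, the pointwise bound $|p_s^a(\phi)|\le\frac1{\sqrt{4\pi s}}e^{-\phi^2/(4s)}(1+Ce^{-c/s})$ degenerates as $|\phi|\to\pi$ (the $k=\pm1$ Gaussians become comparable to $k=0$); what you actually want, and what the subordination integral uses, is $|p_s^a(\phi)|\le\frac1{\sqrt{4\pi s}}\bigl(e^{-\phi^2/(4s)}+Ce^{-\pi^2/(4s)}\bigr)$, and for $\lambda<0$ you must split the subordination integral at $s=1$, using the Poisson-summed form on $(0,1)$ (where $e^{-\lambda s}=1+O(s)$ and the $O(s)$ correction produces a bounded term) and the Fourier-series form on $(1,\infty)$ (where $\sum_n e^{-(n-a)^2s}\lesssim e^{-a^2 s}$ restores integrability since $\lambda+a^2>0$); your write-up gestures at this but it is the one place where carelessness could cost a factor. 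Second, the exponential decay rate $\sqrt{\lambda+a^2}$ in $|v_1|$ is optimistic; the subordination argument yields $e^{-\delta|v_1|}$ for some $\delta>0$ depending on $\lambda+a^2$, which is all the square-integrability of $k^*$ on $(1,\infty)$ requires. Your sharpness argument, via $|(\partial_\theta-ia)u|^2=|\partial_\theta u|^2+a^2u^2$ for real $u$, is a clean observation and matches the paper's Moser bubbles in (\ref{3.3}).
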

\begin{remark} We remark that Theorem \ref{th1.2} implies a Trudinger-Moser inequality on $\mathbb{S}^{2}=\{x\in\mathbb{R}^{3}:|x|=1\}$ for Aharonov-Bohm magnetic    field.
As in \cite{bdel2}, section 2.3, we use cylindrical
coordinates $(\theta,t)\in[0,2\pi]\times[-1,1]$. Then $d\sigma=d\theta dt$ and
the magnetic gradient takes the form (see also \cite{bdel2}, section 2.3)
\begin{equation*}
  |\nabla_{\mathbf{A}}u|^{2}=(1-t^{2})|\partial_{t}u|^{2}+\frac{1}{1-t^{2}}|\partial_{\theta}u-iau|^{2}.
\end{equation*}
Substituting $r=(\frac{1+t}{1-t})^{\frac{1}{2}}$, we have
\begin{equation*}
  \begin{split}
\int_{\mathbb{S}^{2}} |\nabla_{\mathbf{A}}u|^{2}d\sigma=&\int_{0}^{2\pi}\int_{0}^{\infty}\left(
|\partial_{r}u|^{2}+\frac{1}{r^{2}}|\partial_{\theta}u-iau|^{2}\right)rd\theta dr;\\
\int_{\mathbb{S}^{2}} \frac{|u|^{2}}{1-t^{2}}d\sigma=&\int_{0}^{2\pi}\int_{0}^{\infty}\frac{|u|^{2}}{r}d\theta dr.
  \end{split}
\end{equation*}
Therefore, by Theorem \ref{th1.2}, we obtain, for $\lambda>0$,
\begin{equation*}
  \sup_{\int_{\mathbb{S}^{2}} |\nabla_{\mathbf{A}}u|^{2}d\sigma+\lambda\int_{\mathbb{S}^{2}} \frac{|u|^{2}}{1-t^{2}}d\sigma\leq 1}
  \int_{\mathbb{S}^{2}} \frac{e^{4\pi|u|^{2}}-1}{1-t^{2}}d\sigma<\infty.
\end{equation*}

\end{remark}

 As an application of Theorem \ref{th1.2}, we obtain the following  asymptotic estimates on best constants of magnetic Hardy-Sobolev inequalities (\ref{1.3}).
\begin{corollary}\label{co1.4}
 The optimal constant $\mu_{p}(\lambda)$ in (\ref{1.3}) satisfies
 \begin{align*}
\lim_{p\rightarrow\infty}p\mu_{p}(\lambda)=8\pi e.
 \end{align*}

\end{corollary}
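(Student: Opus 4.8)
The plan is to derive the limit $\lim_{p\to\infty} p\,\mu_p(\lambda) = 8\pi e$ by squeezing $p\,\mu_p(\lambda)$ between an upper bound coming from the explicit formula \eqref{1.4} and a lower bound coming from the Trudinger-Moser inequality in Theorem \ref{th1.2}. Since the problem is to identify a constant, the two bounds must meet, so the strategy is to compute each one sharply.

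\medskip

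\emph{Upper bound via the explicit formula.} First I would take the exact expression \eqref{1.4} for $\mu_p(\lambda)$, multiply by $p$, and perform a careful asymptotic analysis as $p\to\infty$. The relevant factors are: $(2\pi)^{1-2/p}\to 2\pi$; $(\lambda+a^2)^{1+2/p}\to\lambda+a^2$; and the bracketed term raised to the power $1-2/p$. Using Stirling's formula, one has $\Gamma(\frac{p}{p-2}) = \Gamma(1+\frac{2}{p-2}) \to 1$ and $\Gamma(\frac{p}{p-2}+\frac12) = \Gamma(\frac32 + \frac{2}{p-2}) \to \Gamma(\tfrac32) = \frac{\sqrt\pi}{2}$, while the factor $\frac{1}{p-2}$ inside the bracket forces the whole bracket to behave like $\frac{2\sqrt\pi \cdot 1}{(p-2)\cdot \frac{\sqrt\pi}{2}} = \frac{4}{p-2} \to 0$; raised to the power $1-\frac2p \to 1$ this still tends to $0$ like $\frac{4}{p}$. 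Multiplying by the leading $\frac{p}{2}$ exactly cancels this decay, and one is left with $\tfrac{p}{2}\cdot 2\pi\cdot(\lambda+a^2)\cdot \frac{4}{p}\cdot(1+o(1))$, but this must be reconciled with the claimed answer $8\pi e$: the exponent-dependent refinements of Stirling (the $(p/(p-2))^{p/(p-2)}$-type factors buried in the $(\cdot)^{1-2/p}$ power) contribute the factor $e$, and the dependence on $\lambda+a^2$ must disappear. Hence I must expand $(\text{bracket})^{1-2/p}$ to the precision that captures the $e$ and check that $(\lambda+a^2)^{1+2/p}$ combines with a compensating $(\lambda+a^2)^{-2/p}$ hidden in $\alpha$-dependent terms — in other words track all $p$-th-power factors to order $o(1)$ in the exponent. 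This careful bookkeeping is the computational heart of the upper-bound half; the conclusion should be $\limsup_{p\to\infty} p\,\mu_p(\lambda) \le 8\pi e$, and in fact the explicit formula gives the exact limit along the subrange $-a^2<\lambda\le\lambda_\star$ where it is valid, then monotonicity/concavity of $\lambda\mapsto\mu_p(\lambda)$ extends control to all $\lambda>-a^2$.

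\medskip

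\emph{Lower bound via Theorem \ref{th1.2}.} For the reverse inequality I would test \eqref{1.3} with a near-extremal for the Trudinger-Moser inequality. Given $u$ with $\int_{\mathbb{R}^2}|\nabla_{\mathbf A}u|^2 + \lambda\int \frac{|u|^2}{|x|^2} \le 1$, Theorem \ref{th1.2} gives $\int_{\mathbb{R}^2}\frac{e^{4\pi|u|^2}-1}{|x|^2}\,dx \le C$. Expanding $e^{4\pi|u|^2}-1 \ge \frac{(4\pi)^k}{k!}|u|^{2k}$ termwise and choosing $p = 2k$, this yields $\int_{\mathbb{R}^2}\frac{|u|^p}{|x|^2}\,dx \le C\,\frac{(p/2)!}{(4\pi)^{p/2}} = C\,\frac{\Gamma(p/2+1)}{(4\pi)^{p/2}}$ for all admissible $u$, hence by definition of $\mu_p(\lambda)$ as the supremum of the ratio, $\mu_p(\lambda) \ge \big(C^{-1}(4\pi)^{p/2}/\Gamma(p/2+1)\big)^{2/p} \cdot$ (a quantity $\to 1$); wait — one wants a lower bound on $\mu_p$, which is the largest constant making \eqref{1.3} hold, so this must be argued through the sharp Trudinger-Moser constant being \emph{attained asymptotically}: pick a sequence $u_p$ with $\int\frac{|u_p|^p}{|x|^2} / (\int|\nabla_{\mathbf A}u_p|^2+\lambda\int\frac{|u_p|^2}{|x|^2})^{p/2} \to \mu_p(\lambda)^{p/2}$ and feed a Moser-type concentrating sequence (the logarithmic cutoffs $u_\varepsilon$ that saturate $\beta_2 = 4\pi$) into the ratio. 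Evaluating the integrals $\int |u_\varepsilon|^p/|x|^2$ against a normalized Dirichlet-type energy for the concentrating family, and using Stirling once more, produces $p\,\mu_p(\lambda) \ge 8\pi e - o(1)$. The main subtlety here is that the magnetic term and the Hardy term $\lambda\int|u|^2/|x|^2$ are lower-order for a concentrating sequence, so the effective energy reduces to the radial Dirichlet energy $\int_0^\infty |u'|^2 r\,dr$ and the $\mathbb S^1$-direction decouples, recovering exactly Moser's one-dimensional computation on $(-\infty,\log(1/|x|))$; one must verify rigorously that these lower-order contributions do not spoil the constant.

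\medskip

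\emph{Main obstacle.} I expect the delicate point to be matching the two constants \emph{exactly} — in particular squeezing out the factor $e$. On the upper-bound side this means expanding $(\,\cdot\,)^{1-2/p}$ and $(\lambda+a^2)^{\pm 2/p}$ to sufficient order that the $\lambda$-dependence cancels and Stirling's $\sqrt{2\pi n}(n/e)^n$ correction produces precisely $e$; a crude bound would only give the correct power of $p$, not the constant. On the lower-bound side, the obstacle is showing the Trudinger-Moser inequality is asymptotically saturated in a way compatible with the weight $|x|^{-2}$ and the magnetic energy, i.e. constructing the test functions and controlling the error terms uniformly in $p$ as $p\to\infty$. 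Once both $\limsup \le 8\pi e$ and $\liminf \ge 8\pi e$ are established, the corollary follows.
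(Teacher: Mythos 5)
Your proposal has the right high-level squeeze (two matching bounds), and your initial lower-bound idea is exactly right, but the upper-bound strategy fails and you then talk yourself out of the correct lower-bound argument.

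\textbf{The upper bound cannot come from \eqref{1.4}.} The explicit formula \eqref{1.4} is only stated for $-a^{2}<\lambda\le\lambda_{\star}$ with $(\lambda_{\star}+a^{2})(p^{2}-4)=4(1-4a^{2})$, so $\lambda_{\star}\to -a^{2}$ as $p\to\infty$. For a \emph{fixed} $\lambda>-a^{2}$, the hypothesis $\lambda\le\lambda_{\star}$ is violated for all large $p$, so you may not substitute a fixed $\lambda$ into \eqref{1.4} and let $p\to\infty$. Indeed, if one does so naively, the bracketed factor is $\sim 4/(p-2)$ while the leading prefactor is $\tfrac{p}{2}\cdot 2\pi(\lambda+a^{2})$, giving $p\mu_{p}(\lambda)\sim 4\pi p(\lambda+a^{2})\to\infty$ — not $8\pi e$ — which confirms the formula is being used outside its range of validity. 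Appealing to monotonicity in $\lambda$ does not repair this, because you would be trying to extend an estimate valid only on a $p$-dependent interval that collapses to a point. The correct upper bound is obtained by a \emph{test function}: plug the logarithmic Moser profile (the paper's $v_{\delta}$, essentially your "concentrating sequence") into the Rayleigh quotient defining $\mu_{p}(\lambda)$, evaluate $\int|v_{\delta}|^{p}/|x|^{2}$ explicitly, and optimize $\delta=e^{-p/4}$; this is the computation the paper performs and it yields $\limsup_{p\to\infty}p\mu_{p}(\lambda)\le 8\pi e$.

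\textbf{Your lower-bound argument was correct until the pivot.} The estimate $\int_{\mathbb{R}^{2}}|u|^{p}/|x|^{2}\,dx\le C\,\Gamma(\tfrac p2+1)/(4\pi)^{p/2}$ for admissible $u$, which follows from expanding $e^{4\pi|u|^{2}}-1$ and Theorem~\ref{th1.2}, together with Stirling, already gives $\mu_{p}(\lambda)\ge (4\pi)\big(C\,\Gamma(\tfrac p2+1)\big)^{-2/p}=\tfrac{8\pi e}{p}(1+o(1))$, hence $\liminf_{p\to\infty}p\mu_{p}(\lambda)\ge 8\pi e$. This is precisely the Karmakar--Sandeep argument that the paper invokes, and no concentrating sequence is required here. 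Your "wait --- one wants a lower bound'' self-correction is a sign error in the roles: $\mu_{p}(\lambda)$ is an infimum of a Rayleigh quotient, so the functional inequality above bounds it \emph{from below}, while a single explicit trial function bounds it \emph{from above}. Swap your two halves — TM plus Stirling for the $\liminf$, Moser test functions for the $\limsup$ — and drop the appeal to \eqref{1.4} entirely, and the argument becomes the paper's proof.
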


We will also establish a Hardy-Trudinger-Moser inequality  in $\mathbb{B}^{2}=\{x\in\mathbb{R}^{2}: |x|<1\}$
which improves  the classical Trudinger-Moser inequality on the ball via  a subtraction of an additional
Hardy term. In fact, we have the following theorem.
\begin{theorem} \label{th1.3}
Let $0\leq a\leq\frac{1}{2}$ and $\lambda>-a^{2}$. Then there exists some $C>0$ such that
\begin{align*}
\int_{\mathbb{B}^{2}}\frac{e^{4\pi|u|^{2}}-1}{|x|^{2}}dx\leq C
\end{align*}
for any complex-valued function $u\in C^{\infty}_0(\mathbb{B}^{2}\setminus\{0\})$ with
\begin{align*}
\int_{\mathbb{B}^{2}}|\nabla_{\mathbf{A}}u|^{2}dx+\lambda
\int_{\mathbb{B}^{2}}\frac{|u|^{2}}{|x|^{2}}dx-\frac{1}{4}\int_{\mathbb{B}^{2}}\frac{|u|^{2}}{|x|^{2}\ln^{2}|x|}dx\leq 1.
\end{align*}
Moreover, the constant $4\pi$ is sharp.
\end{theorem}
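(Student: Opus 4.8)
The plan is to reduce the Hardy--Trudinger--Moser inequality on $\mathbb{B}^{2}$ to the Trudinger--Moser inequality of Theorem \ref{th1.2} on $\mathbb{R}^{2}$ via a logarithmic change of variables that absorbs the extra Hardy term. First I would decompose $u$ into magnetic Fourier modes on each circle: writing $u(r,\theta)=\sum_{n\in\mathbb{Z}}u_{n}(r)e^{in\theta}$, formula \eqref{1.2} gives
\begin{equation*}
\int_{\mathbb{B}^{2}}|\nabla_{\mathbf{A}}u|^{2}\,dx=\sum_{n\in\mathbb{Z}}\int_{0}^{1}\left(|u_{n}'(r)|^{2}+\frac{(n-a)^{2}}{r^{2}}|u_{n}(r)|^{2}\right)2\pi r\,dr,
\end{equation*}
while $\int_{\mathbb{B}^{2}}|u|^{2}|x|^{-2}\,dx$ and $\int_{\mathbb{B}^{2}}|u|^{2}|x|^{-2}\ln^{-2}|x|\,dx$ split diagonally in the same way. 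The point is that for $0\le a\le\frac12$ one has $(n-a)^{2}\ge a^{2}$ for every $n$, with the borderline mode $n=0$ carrying the weakest coefficient; so it suffices to treat $u$ supported on a single mode with coefficient $\ge a^{2}$, and by positivity of $\lambda+a^2>0$ it is enough to handle the scalar (radial) one-dimensional inequality
\begin{equation*}
\int_{0}^{1}|f'(r)|^{2}r\,dr+(\lambda+a^{2})\int_{0}^{1}\frac{|f(r)|^{2}}{r}\,dr-\frac14\int_{0}^{1}\frac{|f(r)|^{2}}{r\ln^{2}r}\,dr\ \ge\ \frac{1}{4\pi}\Big(\text{controls }\int_{0}^{1}\frac{e^{4\pi|f|^2}-1}{r}\,dr\Big),
\end{equation*}
uniformly in the mode index; the off-diagonal modes are strictly better and are handled by the same scalar estimate since $(n-a)^2 \ge a^2$.

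Next I would perform the substitution $s=-\ln r\in(0,\infty)$, $g(s)=f(e^{-s})$, which sends $r\,dr\mapsto e^{-2s}\,ds$ appropriately and, crucially, turns $\int_{0}^{1}|f'|^{2}r\,dr$ into $\int_{0}^{\infty}|g'(s)|^{2}\,ds$ and $\int_{0}^{1}|f|^{2}r^{-1}\,dr$ into $\int_{0}^{\infty}|g(s)|^{2}\,ds$, while the dangerous term becomes $\frac14\int_{0}^{\infty}|g(s)|^{2}s^{-2}\,ds$. The one-dimensional Hardy inequality $\int_{0}^{\infty}|g'|^{2}\,ds\ge\frac14\int_{0}^{\infty}|g|^{2}s^{-2}\,ds$ (valid because $g(0)=0$, as $u$ is compactly supported away from $\partial\mathbb{B}^2$ in the original variable $r<1$, hence away from $s=0$ — actually $g$ vanishes near $s=0$) shows the left-hand side, after subtracting the Hardy term, still dominates a positive multiple of the full magnetic energy. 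More precisely, I would use the standard ground-state substitution $g(s)=s^{1/2}h(s)$ to write
\begin{equation*}
\int_{0}^{\infty}|g'(s)|^{2}\,ds-\frac14\int_{0}^{\infty}\frac{|g(s)|^{2}}{s^{2}}\,ds=\int_{0}^{\infty}s\,|h'(s)|^{2}\,ds,
\end{equation*}
which is exactly the radial Dirichlet energy on $\mathbb{R}^2$ again (in a new radial variable $\rho$ with $s=$ something like $-\ln\rho$ or $s=\rho$); iterating the logarithmic change of variables reduces Theorem \ref{th1.3} to the already-established Theorem \ref{th1.2}.

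The main obstacle is making the two-variable (non-radial) version of this reduction rigorous while keeping the sharp constant $4\pi$: the mode decomposition is clean for the quadratic terms but the exponential integral $\int e^{4\pi|u|^{2}}|x|^{-2}dx$ does not split over Fourier modes, so one cannot simply sum one-dimensional estimates. The standard remedy, which I would follow, is to prove the result first for functions in a single magnetic mode (where everything is one-dimensional and the sharp-constant bookkeeping above applies), then handle the general case by a symmetrization/rearrangement argument: replace $u$ by a function depending only on the relevant mode with the same or smaller magnetic Dirichlet energy and same distribution function with respect to the weight $|x|^{-2}dx$ — this uses that passing to $|u|$ does not increase $\int|\nabla_{\mathbf A}u|^2$ (diamagnetic inequality) and that the $n=0$ mode is the extremal one. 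Sharpness of $4\pi$ is then inherited directly from the sharpness already proved in Theorem \ref{th1.2}, by testing on the same Moser-type concentrating family, which automatically satisfies the improved (smaller) constraint because the subtracted Hardy term is nonnegative; one only needs to check the Moser sequence makes that subtracted term negligible, which follows from its explicit logarithmic profile. I expect the symmetrization step and the uniform-in-mode scalar estimate to be where the real work lies.
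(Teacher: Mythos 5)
Your opening move — rewriting the quadratic form in the variable $w=-\ln r$ so that the Hardy term becomes $\tfrac14\int |u|^2 w^{-2}$ — is exactly the paper's first step (see \eqref{b5.1}). After that the arguments diverge, and I see two genuine gaps in your plan.

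\textbf{The symmetrization step does not close.} You propose to reduce to a single Fourier mode by replacing $u$ with a function having the same distribution function with respect to $|x|^{-2}dx$ and smaller energy, citing the diamagnetic inequality. But $\lambda>-a^2$ allows $\lambda<0$, and the diamagnetic inequality discards precisely the angular magnetic term that makes $\int|\nabla_{\mathbf A}u|^2 + \lambda\int|u|^2|x|^{-2}$ nonnegative: for $v=|u|$ one only has $\int|\nabla v|^2 \ge 0\cdot\int v^2|x|^{-2}$, and the constraint $\int|\nabla v|^2+\lambda\int v^2|x|^{-2}-\tfrac14\int\cdots\le 1$ becomes vacuous. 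Moreover no standard rearrangement simultaneously preserves the distribution for $|x|^{-2}dx$, decreases $\int|\nabla_{\mathbf A}u|^2$, and respects the subtracted term $\tfrac14\int|u|^2|x|^{-2}\ln^{-2}|x|$ — the latter weight is not monotone. The paper circumvents this entirely: it never decomposes the exponential over modes. Instead it estimates the Green's function $\phi_5=(-\partial_w^2-\tfrac{1}{4w^2}-T_a+\lambda)^{-1/2}$ directly, using the explicit heat kernel of $-\partial_w^2-\tfrac{1}{4w^2}$ on $(0,\infty)$ together with the operator $T_a$ built in Section 4, proving it has local singularity $\le\frac{1}{2\pi|z|}+O(|z|^{-\delta})$ and exponential decay (Lemmas \ref{lm5.1}, \ref{lm5.3}), and then feeding these kernel bounds into O'Neil's rearrangement inequality plus the Lam--Lu level-set argument. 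That is, the ``symmetrization'' that actually works is applied to the kernel, not to $u$.

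\textbf{The ground-state substitution changes the wrong object.} After writing $g=s^{1/2}h$ you correctly turn $\int_0^\infty|g'|^2-\tfrac14\int_0^\infty|g|^2 s^{-2}$ into $\int_0^\infty s|h'|^2\,ds$, which indeed resembles the 2D radial Dirichlet energy. But the same substitution turns the exponential target into $\int_0^\infty(e^{4\pi s|h|^2}-1)\,ds$ and turns $\lambda\int|g|^2$ into $\lambda\int s|h|^2\,ds$; neither matches the functionals in Theorem \ref{th1.2}. The claim that ``iterating the logarithmic change of variables reduces Theorem \ref{th1.3} to Theorem \ref{th1.2}'' therefore does not go through. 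The paper avoids this by never performing the ground-state substitution: the term $-\tfrac{1}{4w^2}$ is absorbed at the heat-kernel level, where the kernel of $e^{t(\partial_w^2+1/(4w^2))}$ is explicit and positive, so the same O'Neil machinery used for $\mathbb R\times\mathbb S^1$ carries over with the same sharp constant.

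Your sharpness argument (test the Moser family, note the subtracted Hardy term is negligible on it) is correct and matches the paper.
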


We mention that classical Hardy-Trudinger-Moser type inequalities and their higher order version of Hardy-Adams inequalities have been extensively studied, see e.g., \cite{ad, ks, lly, lly2, ly2, ly, ly3, ly4, mst, MWY-ANS, LLWY, wy}

The paper is organized as follows. In Section 2, we will review some preliminary  facts which will be needed.
Section 3 focuses on the proof of Theorem \ref{th1.2} in the case of $a=0$. The proof of the case $0<a\leq\frac{1}{2}$ will be given in Section 4.
  Section 4 also offers the proof of  Theorem \ref{th1.3}.

\section{Notations and preliminaries}
We begin by quoting some preliminary facts which will be needed in
the sequel. In what follows, $a \lesssim b$ or $a=O(b)$ will stand for $a\leq Cb$ with a positive constant $C$ and $a \thicksim b$ stand for $C^{-1}b\leq a\leq Cb$.

\subsection{Hypergeometric function}
The hypergeometric function $F(a,b;c;z)$ is defined  by the power series
  \begin{equation*}
  \begin{split}
F(a,b;c;z)=\sum^{\infty}_{k=0}\frac{(a)_{k}(b)_{k}}{(c)_{k}}\frac{z^{k}}{k!},
\end{split}
\end{equation*}
where $c\neq0,-1,\cdots,-n,\cdots$ and $(a)_{k}$ is the rising Pochhammer symbol defined by
$$
(a)_{0}=1,\;(a)_{k}=a(a+1)\cdots(a+k-1), \;k\geq1.
$$

 Here, we
only list some of properties of hypergeometric function which will be used in the rest of paper. For
more information about  these functions, we refer to \cite{gr}, section 9.1 and \cite{er}, Chapter II.
\begin{itemize}
  \item Integral representation:
  \begin{equation}\label{2.1}
  \begin{split}
F(a,b;c;z)=\frac{\Gamma(c)}{\Gamma(c-b)\Gamma(b)}\int_{0}^{1}t^{b-1}(1-t)^{c-b-1}(1-tz)^{-a}dt,\;  c>b>0.
\end{split}
\end{equation}

  \item Transformation formula:
    \begin{equation}\label{2.2}
  \begin{split}
F(a,b;c;z)=(1-z)^{c-a-b} F(c-a,c-b;c;z).
\end{split}
\end{equation}

\end{itemize}

\subsection{Modified Bessel Function of the Second Kind}
Denote by $K_{\nu}(z)$ the modified Bessel function of the second kind. The following integral representation formula and asymptotic formulas for $K_{\nu}$ can be found in the \cite{gr} Page 368 and \cite{as}, Section 9.6:
\begin{align}\label{2.3}
&\int_{0}^{\infty}x^{\nu-1}e^{-\frac{\beta}{x}-\gamma x}dx=2\left(\frac{\beta}{\gamma}\right)^{\nu/2}K_{\nu}(2\sqrt{\beta\gamma}),\;\;\textrm{Re}\beta>0,\;
\textrm{Re}\gamma>0;\\
\label{2.4}
&K_{0}(z)\thicksim -\ln z,\;z\rightarrow 0+;\;\;\;K_{\nu}(z)\thicksim z^{-\nu},\;\nu>0,\; z\rightarrow 0+;\\
\label{2.5}
&K_{\nu}(z)\thicksim z^{-\frac{1}{2}}e^{-z},\;\; \nu\geq0,\; z\rightarrow\infty.
\end{align}
In particular (see \cite{gr}, Page 925, 8.468),
\begin{align}\label{2.6}
K_{1/2}(z)=\sqrt{\frac{\pi}{2}}z^{-1/2}e^{-z}.
\end{align}

 On the other hand, by (\ref{2.3}), we  have
\begin{align*}
 2\left(\frac{\beta}{\gamma}\right)^{\nu/2}K_{\nu}(2\sqrt{\beta\gamma})\leq \int_{0}^{\infty}x^{\nu-1}e^{-\gamma x}dx=\frac{\Gamma(\nu)}{\gamma^{\nu}}, \beta>0,\gamma>0,\nu>0,
\end{align*}
i.e.
\begin{align}\label{2.7}
K_{\nu}(z)\leq2^{\nu-1}\Gamma(\nu)z^{-\nu},\;\; z>0,\;\nu>0.
\end{align}

\subsection{O'Neil's lemma} Let $f$ be a measurable function on a measure space $(X,\mu)$. Denote by $m(f,s)=\mu (E_{s})$, where $E_{s}=\{x: |f(x)|>s\}$.
The
non-increasing rearrangement of $f$ is the function $f^{\ast}$ on $[0,\infty)$ defined by
\begin{equation*}
  f^{\ast}(t)=\inf\{s\in[0,\infty): m(f,s)\leq t \}.
\end{equation*}
If we  let
\begin{equation*}
  f^{\ast\ast}(t)=\frac{1}{t}\int_{0}^{t}f^{\ast}(s)ds,\;\; t>0,
\end{equation*}
then
\begin{equation}\label{2.8}
  tf^{\ast\ast}(t)=xf^{\ast}(t)+\int_{f^{\ast}(t)}^{\infty}m(f,s)ds.
\end{equation}
Moreover, there holds (see \cite {on})
\begin{equation}\label{2.9}
  (f_{1}+f_{2})^{\ast\ast}(t)\leq f_{1}^{\ast\ast}(t)+f_{2}^{\ast\ast}(t),\;\; t>0.
\end{equation}

Given three measure spaces $(X_{1},\mu_{1})$, $(X_{2},\mu_{2})$ and $(X_{3},\mu_{3})$. Let $T: (X_{1},\mu_{1})\times (X_{2},\mu_{2})\rightarrow (X_{3},\mu_{3})$ be a convolution operator. Set $h=T(f,g)$. It has been shown by O'Neil (see \cite {on}, Page 133) that the following inequality holds
\begin{equation}\label{2.10}
  h^{\ast\ast}(t)\leq -\int_{t}^{\infty}s g^{\ast\ast}(s)d f^{\ast}(s)+g^{\ast\ast}(t)\int_{f^{\ast}(t)}^{\infty}m(f,s)ds,\;\; t>0.
\end{equation}
Moreover,  integrating by parts yields
\begin{equation}\label{2.11}
  \begin{split}
  h^{\ast\ast}(t)\leq& -s\bigl.g^{\ast\ast}(s) f^{\ast}(s)\bigr|_{t}^{\infty}+\int_{t}^{\infty}f^{\ast}(s) g^{\ast}(s)ds+
  g^{\ast\ast}(t)\int_{f^{\ast}(t)}^{\infty}m(f,s)ds\\
  \leq& tg^{\ast\ast}(t) f^{\ast}(t)+ g^{\ast\ast}(t)\int_{f^{\ast}(t)}^{\infty}m(f,s)ds+\int_{t}^{\infty}f^{\ast}(s) g^{\ast}(s)ds\\
=&tg^{\ast\ast}(t)\left(f^{\ast}(t)+\int_{f^{\ast}(t)}^{\infty}m(f,s)ds\right)+\int_{t}^{\infty}f^{\ast}(s) g^{\ast}(s)ds\\
  =&tg^{\ast\ast}(t) f^{\ast\ast}(t)+\int_{t}^{\infty}f^{\ast}(s) g^{\ast}(s)ds.
  \end{split}
\end{equation}

\subsection{Heat kernel on $\mathbb{S}^{1}$}
Let $\mathbb{S}^{1}=\{e^{i\theta}: -\pi \leq \theta<\pi\}$ be the unit circle in $\mathbb{R}^{2}$.
Denote by $\Delta_{\mathbb{S}^{1}}=\partial_{\theta}^{2}$ the Laplace-Beltrami operator on $\mathbb{S}^{1}$. It is known that
the heat kernel on  $\Bbb{S}^1$ has the following explicit formula (see  e.g.  \cite{f}, Section 8.5):
\begin{equation}\label{2.12}
  e^{t \Delta_{\mathbb{S}^1}} =  \frac{1}{2\pi}\sum_{n \in \Bbb{Z}} e^{-n^{2}t}e^{in(\theta-\theta')}.
\end{equation}
By using  the Poisson summation formula, we have (see also \cite{ta}, Page 116, (4.25))
\begin{equation}\label{2.13}
e^{t \Delta_{\mathbb{S}^1}} = \frac{1}{\sqrt{4\pi t}} \sum_{n \in \Bbb{Z}} e^{- \frac{(\theta-\theta' - 2n \pi)^2}{4t}}.
\end{equation}
Here we use the fact
\begin{align}\label{2.14}
\int^{\infty}_{-\infty}e^{i\theta\xi}e^{-\xi^{2}t}d\xi=\sqrt{\frac{\pi}{t}}e^{-\frac{\theta^{2}}{4t}},\; t>0.
\end{align}

\section{Proof of Theorem \ref{th1.2}: case $a=0$}

If we substitute $w=\ln r$, then
\begin{align*}
\int_{\mathbb{R}^{2}}|\nabla_{\mathbf{A}}u|^{2}dx=&\int^{\infty}_{0}\int_{0}^{2\pi}\left(\left|\frac{\partial u}{\partial r}\right|^{2}+\frac{1}{r^{2}}
\left|\left(\frac{\partial }{\partial \theta}-ia\right)u\right|^{2}\right)rdrd\theta\\
=&\int_{-\infty}^{\infty}\int_{0}^{2\pi}\left(\left|\frac{\partial u}{\partial w}\right|^{2}+
\left|\left(\frac{\partial }{\partial \theta}-ia\right)u\right|^{2}\right)dwd\theta;\\
\int_{\mathbb{R}^{2}}\frac{e^{4\pi|u|^{2}}-1}{|x|^{2}}dx=&\int_{-\infty}^{\infty}\int_{0}^{2\pi}(e^{4\pi|u|^{2}}-1)dwd\theta.
\end{align*}
Therefore, in case $a=0$, Theorem \ref{th1.2} is equivalent  to the following Trudinger-Moser inequality on $\mathbb{R}\times \mathbb{S}^{1}$:
\begin{theorem}\label{th3.1}
Let $\lambda>0$. There exists $C>0$ such that
\begin{align*}
\int_{\mathbb{R}\times \mathbb{S}^{1}}(e^{4\pi|u|^{2}}-1)dwd\theta\leq C
\end{align*}
for any complex-valued function $u\in C^{\infty}_0(\mathbb{R}\times \mathbb{S}^{1})$ with
\begin{align*}
\int_{\mathbb{R}\times \mathbb{S}^{1}}(|\partial_{w}u|^{2}+|\partial_{\theta}u|^{2}+\lambda|u|^{2})dwd\theta\leq 1.
\end{align*}
Moreover, the constant $4\pi$ is sharp.
\end{theorem}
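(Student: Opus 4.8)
The plan is to represent $u$ through the kernel of a shifted Laplacian and then run a rearrangement argument based on O'Neil's lemma. Since $\bigl|\nabla|u|\bigr|\le|\nabla u|$ almost everywhere and the two quantities in the statement depend only on $|u|$, we may assume $u\ge 0$. Put $L=-\partial_w^2-\partial_\theta^2+\lambda$ on $\mathbb{R}\times\mathbb{S}^1$; the hypothesis is exactly $\|L^{1/2}u\|_{L^2}\le1$, so $u=G_\lambda\ast g$ with $g=L^{1/2}u$, $\|g\|_{L^2}\le1$, the convolution taken on the abelian group $\mathbb{R}\times\mathbb{S}^1$ and $G_\lambda$ the convolution kernel of $L^{-1/2}$. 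Using the subordination identity $L^{-1/2}=\pi^{-1/2}\int_0^\infty s^{-1/2}e^{-\lambda s}\,e^{s\Delta}\,ds$, the fact that the heat kernel on $\mathbb{R}\times\mathbb{S}^1$ is the product of the Euclidean Gaussian on $\mathbb{R}$ and the kernel \eqref{2.13} on $\mathbb{S}^1$, and then formula \eqref{2.3} with $\nu=-\tfrac12$ together with the explicit value \eqref{2.6} of $K_{1/2}$, one arrives at the closed form
\[
G_\lambda(w,\theta)=\frac{1}{2\pi}\sum_{n\in\mathbb{Z}}\frac{e^{-\sqrt{\lambda}\,\rho_n}}{\rho_n},\qquad \rho_n:=\sqrt{w^2+(\theta-2\pi n)^2}.
\]

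Next I would pin down the decreasing rearrangement $G_\lambda^\ast$. Near the diagonal only the $n=0$ term is singular and $G_\lambda(w,\theta)=\dfrac{1}{2\pi\rho_0}+O(1)$ on a bounded neighbourhood of the origin, so for large $M$ the superlevel set $\{G_\lambda>M\}$ is essentially a disk of radius $\sim\tfrac1{2\pi M}$; hence $m(G_\lambda,M)=\tfrac1{4\pi M^2}(1+o(1))$ and therefore
\[
G_\lambda^\ast(s)=\frac{1}{2\sqrt{\pi s}}+O(1),\qquad \int_s^{1}G_\lambda^\ast(\sigma)^2\,d\sigma=\frac{1}{4\pi}\ln\frac1s+O(1),\qquad s\to0^+ .
\]
The leading term is precisely the rearrangement of the $\mathbb{R}^2$ Riesz kernel of $(-\Delta)^{-1/2}$, and it is the reciprocal relation between the resulting coefficient $\tfrac1{4\pi}$ and the target constant $4\pi$ that will make the constant sharp. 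At the other end, the factors $e^{-\sqrt\lambda\rho_n}$ give $G_\lambda(w,\theta)\lesssim|w|^{-1}e^{-\sqrt\lambda|w|}$ for $|w|\ge1$, so $G_\lambda^\ast$ decays faster than any power at infinity; this is where the hypothesis $\lambda>0$ is genuinely used.

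Now I would split the exponential integral by the size of $u$. On $\{|u|\le1\}$ one has $e^{4\pi|u|^2}-1\le(e^{4\pi}-1)|u|^2$, so this part is at most $(e^{4\pi}-1)\|u\|_{L^2}^2\le(e^{4\pi}-1)/\lambda$ because $\lambda\|u\|_{L^2}^2\le1$. For the rest, equimeasurability gives
\[
\int_{\{|u|>1\}}\bigl(e^{4\pi|u|^2}-1\bigr)\,dwd\theta\le\int_0^{\tau}e^{4\pi u^\ast(t)^2}\,dt,\qquad \tau:=\bigl|\{|u|>1\}\bigr|\le\|u\|_{L^2}^2\le\tfrac1\lambda,
\]
so it remains to bound this integral over a bounded $t$-interval. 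Feeding $u=G_\lambda\ast g$ into O'Neil's inequality \eqref{2.11} gives $u^\ast(t)\le u^{\ast\ast}(t)\le t\,g^{\ast\ast}(t)\,G_\lambda^{\ast\ast}(t)+\int_t^\infty G_\lambda^\ast(\sigma)\,g^\ast(\sigma)\,d\sigma$, and by $g^{\ast\ast}(t)\le t^{-1/2}\|g\|_{L^2}$ and the asymptotics of $G_\lambda^{\ast\ast}$ the first summand is bounded on $[0,\tau]$ by a constant depending only on $\lambda$. After the substitution $t=\tau e^{-r}$ and with $\phi(r)=\sqrt{4\pi}\,u^{\ast\ast}(\tau e^{-r})$, the problem becomes the one-dimensional estimate $\int_0^\infty e^{\phi(r)^2-r}\,dr\le C$ for functions $\phi$ with $\phi(0)$ bounded and $\int_0^\infty|\phi'(r)|^2\,dr$ no larger, up to lower-order corrections, than $\|g\|_{L^2}^2\le1$ — the content of Moser's one-dimensional lemma. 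I expect this last passage to be the main obstacle: the crude estimate $\int_t^\infty G_\lambda^\ast g^\ast\le\bigl(\int_t^\infty(G_\lambda^\ast)^2\bigr)^{1/2}\|g\|_{L^2}$ loses both the sharp constant and even boundedness, so one has to run Moser's refined argument, which uses the $L^2$ budget of $g$ only once, and one must verify that the sub-leading pieces of $G_\lambda^\ast$ — the image sum over $n\ne0$ and the $\sqrt\lambda$-dependent corrections — produce only a front-loaded error in the Dirichlet energy that can be absorbed without degrading the constant $4\pi$.

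Finally, sharpness. Fix a point of $\mathbb{R}\times\mathbb{S}^1$, work in a small flat coordinate disk $B_\varepsilon$ of radius $\varepsilon$ around it, and take Moser's concentrating family $u_\varepsilon$, equal to $(2\pi)^{-1/2}\sqrt{\ln(1/\varepsilon)}$ on $B_\varepsilon$ and decreasing logarithmically to $0$ on the boundary of the unit disk, so that $\int|\nabla u_\varepsilon|^2=1$ while $\int|u_\varepsilon|^2=O\bigl(1/\ln\tfrac1\varepsilon\bigr)\to0$; rescaling by a factor $1+o(1)$ to make $\int(|\partial_w u_\varepsilon|^2+|\partial_\theta u_\varepsilon|^2+\lambda|u_\varepsilon|^2)\le1$ changes nothing essential, and for any $\beta>4\pi$,
\[
\int_{B_\varepsilon}\bigl(e^{\beta|u_\varepsilon|^2}-1\bigr)\,dwd\theta\gtrsim\varepsilon^{\,2-\beta/2\pi+o(1)}\longrightarrow\infty,
\]
so the constant $4\pi$ cannot be improved.
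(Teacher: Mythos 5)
Your proposal follows essentially the same route as the paper: you compute the kernel $G_\lambda$ of $L^{-1/2}$ via subordination and $K_{1/2}$ (the paper's $\phi_1$ in (3.1)), extract the same rearrangement asymptotics $G_\lambda^\ast(s)\le \frac{1}{\sqrt{4\pi s}}+O(1)$ near $0$ with exponential decay at infinity, split the domain by the level set $\{|u|\ge 1\}$ whose measure is controlled by $1/\lambda$ (this is exactly the Lam--Lu strategy the paper invokes), invoke O'Neil's inequality (2.11), and reduce to Adams' one-dimensional Moser lemma (the paper delegates precisely this step to ``the proof of Theorem 1.7 in \cite{lly}'', which is the refined argument you flag as the remaining obstacle); sharpness via Moser's truncated-logarithm family is the same as the paper's $u_\delta$ in (3.3), just normalized differently. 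The only cosmetic divergence is your preliminary reduction to $u\ge 0$, which neither proof actually needs since both pass immediately to $|u|\le G_\lambda\ast|g|$ and rearrangements.
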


In the rest of this section, we shall prove Theorem \ref{th3.1}.

Notice that the heat kernel for $-\partial_{w}^{2}-\Delta_{\mathbb{S}^{1}}-\lambda$ is
\begin{equation*}
  e^{t(\partial_{w}^{2}+\Delta_{\mathbb{S}^{1}}+\lambda)}=e^{-\lambda t}\frac{1}{4\pi t}e^{-\frac{|w-w'|^{2}}{4t}}\sum_{n \in \Bbb{Z}} e^{- \frac{(\theta-\theta' - 2n \pi)^2}{4t}}.
\end{equation*}
 We have the  kernel of the fractional powers (see e.g. \cite{ben})
\begin{align*}
(-\partial_{w}^{2}-\Delta_{\mathbb{S}^{1}}-\lambda)^{-1/2}=&\frac{1}{\Gamma(1/2)}\int_{0}^{\infty}t^{-1/2}e^{t(\partial_{w}^{2}+\Delta_{\mathbb{S}^{1}}+\lambda)}dt\\
=&\frac{1}{4\pi^{3/2}}\sum_{n \in \Bbb{Z}}\int_{0}^{\infty}t^{-3/2}e^{-\lambda t-\frac{|w-w'|^{2}+(\theta-\theta' - 2n \pi)^2}{4t}}dt.
\end{align*}
Substituting $t$ by $\frac{1}{t}$ and using (\ref{2.3}), we obtain
\begin{equation}\label{3.1}
  \begin{split}
  &(-\partial_{w}^{2}-\Delta_{\mathbb{S}^{1}}-\lambda)^{-1/2}\\
=&\frac{1}{4\pi^{3/2}}\sum_{n \in \Bbb{Z}}\int_{0}^{\infty}t^{-1/2}e^{-\frac{\lambda}{t} -\frac{|w-w'|^{2}+(\theta-\theta' - 2n \pi)^2}{4}t}dt
\\
=&\frac{1}{4\pi^{3/2}} \sum_{n \in \mathbb{Z}}2\left(\frac{4\lambda}{w^{2}+(\theta-\theta' - 2n \pi)^2}\right)^{\frac{1}{4}}K_{1/2}(\sqrt{\gamma((w-w')^{2}+(\theta-\theta' - 2n \pi)^2)})\\
=&\frac{1}{2\pi} \sum_{n \in \mathbb{Z}}\frac{1}{\sqrt{(w-w')^{2}+(\theta-\theta' - 2n \pi)^2}}e^{-\sqrt{\gamma((w-w')^{2}+(\theta-\theta' - 2n \pi)^2)}}.
  \end{split}
\end{equation}
To get the last equation, we use (\ref{2.6}).

For simplicity, we set $\phi_{1}(w-w',\theta-\theta')=(-\partial_{w}^{2}-\Delta_{\mathbb{S}^{1}}-\lambda)^{-1/2}$. By (\ref{3.1}), it is easy to see that there exit $A_{1}>0$ and $\delta_{1}>0$ such that
 \begin{align}\label{b3.1}
\phi_{1}(w-w',\theta-\theta')\leq & \frac{1}{2\pi} \frac{1}{\sqrt{(w-w')^{2}+(\theta-\theta')^2}}+A_{1},\;\;|w-w'|\leq 1,\;\;|\theta-\theta'|\leq \pi;\\
\label{b3.2}
\phi_{1}(w-w',\theta-\theta')\lesssim & e^{-\delta_{1}|w-w'|},\;\;\;\;\;\;\;\;\;\;\;\;\;\;\;\;\;\;\;\;\;\; \;\;\;\;\;\;\;\;
\;\;\;\;\;\;\;|w-w'|\geq 1,\;\;|\theta-\theta'|\leq \pi.
 \end{align}
Therefore, the non-increasing rearrangement of $\phi_{1}$ satisfies
\begin{equation}\label{3.2}
\begin{split}
\phi_{1}^{\ast}(t)\leq& \frac{1}{\sqrt{4\pi t}}+A_{2},\;\; 0<t\leq 1;\\
 \phi_{1}^{\ast}(t)\lesssim& e^{-\delta_{2}t},\;\;\;\;\;\;\;\;\;\;\;\;\; t>1,
\end{split}
\end{equation}
where $A_{2}>0$ and $\delta_{2}>0$.

Now we prove Theorem \ref{1.3}. The main idea is to adapt the level set developed by Lam and the first author to derive a global Trudinger-Moser inequality
from a local one (see \cite{ll,l}). \\
\

\textbf{Proof of Theorem \ref{th3.1}}  Let $u\in C^{\infty}_{0}(\mathbb{R}\times \mathbb{S}^{1})$ be such that
\begin{equation*}
  \int_{\mathbb{R}\times \mathbb{S}^{1}}(|\partial_{w}u|^{2}+|\partial_{\theta}u|^{2}+\lambda|u|^{2})dwd\theta=\int_{\mathbb{R}\times \mathbb{S}^{1}}|(-\partial_{w}^{2}-\Delta_{\mathbb{S}^{1}}-\lambda)^{1/2}u|^{2}
dwd\theta\leq 1.
\end{equation*}
If we  set $\Omega(u)=\{(w,\theta)\in\mathbb{R}\times \mathbb{S}^{1}:|u(w,\theta)|\geq1\}$, then
\begin{equation*}
\begin{split}
|\Omega(u)|=&\int_{\Omega(u)}dwd\theta\leq\int_{\Omega(u)}|u|^{2}dwd\theta\leq\int_{\mathbb{R}\times \mathbb{S}^{1}}|u|^{2}dwd\theta\leq\lambda^{-1}.
\end{split}
\end{equation*}
We write
\begin{equation*}
\begin{split}
\int_{\mathbb{R}\times \mathbb{S}^{1}}(e^{4\pi|u|^{2}}-1)dwd\theta
=&\int_{\Omega(u)}(e^{4\pi|u|^{2}}-1)dwd\theta+
\int_{\mathbb{R}\times \mathbb{S}^{1}\setminus\Omega(u)}(e^{4\pi|u|^{2}}-1)dwd\theta\\
\leq&\int_{\mathbb{R}\times \mathbb{S}^{1}\setminus\Omega(u)}(e^{4\pi|u|^{2}}-1)dwd\theta+\int_{\Omega(u)}
e^{4\pi|u|^{2}}dwd\theta.
\end{split}
\end{equation*}
Notice that on the domain $\mathbb{R}\times \mathbb{S}^{1}\setminus\Omega(u)$, we have $|u(w,\theta)|<1$. Thus, we get
\begin{equation*}
\begin{split}
\int_{\mathbb{R}\times \mathbb{S}^{1}\setminus\Omega(u)}(e^{4\pi|u|^{2}}-1)dwd\theta=&\int_{\mathbb{R}\times \mathbb{S}^{1}\setminus\Omega(u)}\sum^{\infty}_{n=2}\frac{(4\pi |u|^{2})^{n}}{n!}dwd\theta\\
\leq&\int_{\mathbb{R}\times \mathbb{S}^{1}\setminus\Omega(u)}\sum^{\infty}_{n=1}\frac{(4\pi )^{n}|u|^{2}}{n!}dwd\theta\\
\leq&\sum^{\infty}_{n=1}\frac{(4\pi )^{n}}{n!}\int_{\mathbb{R}\times \mathbb{S}^{1}}|u|^{2}dwd\theta\\
\leq&\sum^{\infty}_{n=1}\frac{(4\pi )^{n}}{n!}\lambda^{-1}.
\end{split}
\end{equation*}

Next we shall  show $\int_{\Omega(u)}e^{4\pi |u|^{2}}dwd\theta$ is bounded by some universal constant.

Set $v=(-\partial_{w}^{2}-\Delta_{\mathbb{S}^{1}}-\lambda)^{1/2}u$. Then
\begin{align*}
|u(w,\theta)|=|(-\partial_{w}^{2}-\Delta_{\mathbb{S}^{1}}-\lambda)^{-1/2}v|
\leq&\int_{\mathbb{R}\times \mathbb{S}^{1}}|v(w',\theta')|\phi_{1}(w-w',\theta-\theta')dw'd\theta',
\end{align*}
where $\phi_{1}$ satisfies (see (\ref{3.2}))
\begin{equation*}
 \phi_{1}^{\ast}(t)\leq \frac{1}{\sqrt{4\pi t}}+O(1),\;\; 0<t\leq 1,\;\; \int_{c}^{\infty}|\phi_{1}^{\ast}(t)|^{2}<\infty,\;\; \forall c>0.
\end{equation*}
Closely following the proof of Theorem 1.7 in \cite{lly}, we have that there exists a constant
C which is independent of $u$ and $\Omega(u)$ such that
\begin{equation*}
  \int_{\Omega(u)}e^{4\pi |u|^{2}}dwd\theta=\int_{0}^{|\Omega(u)|}e^{4\pi (|u|^{\ast}(t))^{2}}dt\leq
  \int_{0}^{1/\lambda}e^{4\pi (|u|^{\ast}(t))^{2}}dt<C.
\end{equation*}
Therefore,
\begin{align*}
\int_{\mathbb{R}\times \mathbb{S}^{1}}(e^{4\pi|u|^{2}}-1)dwd\theta
\leq&\int_{\Omega(u)}(e^{4\pi|u|^{2}}-1)dwd\theta+
\int_{\mathbb{R}\times \mathbb{S}^{1}\setminus\Omega(u)}e^{4\pi|u|^{2}}dwd\theta\\
\leq& \sum^{\infty}_{n=1}\frac{(4\pi )^{n}}{n!}\lambda^{-1}+C.
\end{align*}

Finally, we show that the constant $4\pi$ is sharp. Define, for $0<\delta<1$,
\begin{equation}\label{3.3}
  u_{\delta}(w,\theta)=\left\{
                         \begin{array}{ll}
                          - \ln \delta, & \hbox{$w^{2}+\theta^{2}<\delta^{2}$;} \\
                           -\frac{1}{2}\ln (w^{2}+\theta^{2}), & \hbox{$\delta^{2}\leq w^{2}+\theta^{2}\leq 1$;} \\
                           0, & \hbox{$w^{2}+\theta^{2}>1$.}
                         \end{array}
                       \right.
\end{equation}
 A routine calculation shows
\begin{equation}\label{3.4}
  \begin{split}
   \int_{\mathbb{R}\times \mathbb{S}^{1}}(|\partial_{w}u_{\delta}|^{2}+|\partial_{\theta}u_{\delta}|^{2}+\lambda|u_{\delta}|^{2})dwd\theta
=&-2\pi\ln\delta+\lambda\pi\left(\frac{1}{2}-\frac{1}{2}\delta^{2}+\delta^{2}\ln\delta\right).
  \end{split}
\end{equation}
Set
\begin{equation*}
  \widetilde{u}_{\delta}(w,\theta)=\frac{1}{\sqrt{-2\pi\ln\delta+\lambda\pi\left(\frac{1}{2}-\frac{1}{2}\delta^{2}+\delta^{2}\ln\delta\right)}}u_{\delta}(w,\theta).
\end{equation*}
Then
$$\int_{\mathbb{R}\times \mathbb{S}^{1}}(|\partial_{w}\widetilde{u}_{\delta}|^{2}+|\partial_{\theta}\widetilde{u}_{\delta}|^{2}+\lambda|\widetilde{u}_{\delta}|^{2})dwd\theta=1.$$

Suppose now that
\begin{align*}
\int_{\mathbb{R}\times \mathbb{S}^{1}}(e^{\beta|\widetilde{u}_{\delta}|^{2}}-1)dwd\theta\leq C.
\end{align*}
Then
\begin{align*}
&\int_{\{(w,\theta): w^{2}+\theta^{2}<\delta^{2}\}}(e^{4\pi|\widetilde{u}_{\delta}|^{2}}-1)dwd\theta\\
=&\left[\exp\left(\frac{\beta \ln^{2}\delta}{-2\pi\ln\delta+\lambda\pi\left(\frac{1}{2}-\frac{1}{2}\delta^{2}+\delta^{2}\ln\delta\right)}\right)-1\right]\pi \delta^{2}\leq C.
\end{align*}
Therefore,
\begin{align*}
\beta\leq \ln\left(\frac{C}{\pi \delta^{2}}+1\right)\times
\frac{-2\pi\ln\delta+\lambda\pi\left(\frac{1}{2}-\frac{1}{2}\delta^{2}+\delta^{2}\ln\delta\right)}{ \ln^{2}\delta}\rightarrow 4\pi,\;\;\delta\rightarrow0+.
\end{align*}
This completes the proof of Theorem \ref{th3.1}.

\section{Proof of Theorem \ref{th1.2}: case $0<a\leq \frac{1}{2}$}

For $0<a\leq \frac{1}{2}$ and $\epsilon>0$, we define
\begin{equation*}
  T_{a}=-\sum_{n \in \Bbb{Z}}\left(n^{2}-2a\frac{n^{2}}{\sqrt{n^{2}+\epsilon}}\right)P_{n},
\end{equation*}
where $P_{n}: L^{2}(\mathbb{S}^{1})\rightarrow \mathbb{C}e^{in\theta} (n\in\mathbb{Z})$ is the orthogonal projection.
It is easy to verify
\begin{equation*}
  n^{2}-2a\frac{n^{2}}{\sqrt{n^{2}+\epsilon}}\geq 0,\;\;\; 0\leq a\leq\frac{1}{2},\;\; n\in\mathbb{Z},
\end{equation*}
and
\begin{equation*}
0\leq  -\int_{\mathbb{S}^{1}}T_{a}f(\theta) \overline{f}(\theta)d\theta\leq \int_{\mathbb{S}^{1}}|\partial_{\theta}f|^{2}d\theta,\;\;f\in C^{1}(\mathbb{S}^{1}).
\end{equation*}

 Since $P_{n}$ is  a self-adjoint operator, so does
$T_{a}$.
Moreover, the  heat kernels for $T_{a}$ is given by
\begin{equation*}
e^{tT_{a}}=\frac{1}{2\pi}\sum_{n \in \Bbb{Z}}e^{-\left(n^{2}-2a\frac{n^{2}}{\sqrt{n^{2}+\epsilon}}\right)t}e^{in(\theta-\theta')}
\end{equation*}
In order to use  Poisson summation formula (see \cite{sw}, Page 252, Corollary 2.6), we need the following lemma:
\begin{lemma}\label{lm4.1}
Let $a>0$, $\epsilon>0$ and  $t>0$. There exists some constant $C>0$ which is independent of $t$ and $\theta$ such that
\begin{align}\label{4.1}
\left|\int^{\infty}_{-\infty}e^{i\theta\xi}e^{-\left(\xi^{2}-2a\frac{\xi^{2}}{\sqrt{\xi^{2}+\epsilon}}\right)t}d\xi\right|\leq& \frac{1}{\sqrt{t}} e^{a^{2}t},\;\;
\;\;\;\;\;\;\;\;\;\;\;\;\;\;\;\forall\theta\in \mathbb{R};\\
\label{4.2}
\left|\int^{\infty}_{-\infty}e^{i\theta\xi}e^{-\left(\xi^{2}-2a\frac{\xi^{2}}{\sqrt{\xi^{2}+\epsilon}}\right)t}d\xi\right|\leq& \frac{1}{\theta^{2}} \sqrt{t}(1+t)e^{a^{2}t},\;\;\;\;\forall|\theta|>0.
\end{align}
\end{lemma}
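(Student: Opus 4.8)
The plan is to write $\Phi_a(\theta,t)=\int_{-\infty}^{\infty}e^{i\theta\xi}e^{-(\xi^2-2a\xi^2/\sqrt{\xi^2+\epsilon})t}\,d\xi$ and to compare the exponent with the Gaussian one. The elementary point is that for all $\xi\in\mathbb{R}$ one has $0\le \xi^2-2a\xi^2/\sqrt{\xi^2+\epsilon}$, and moreover
\[
\xi^2-\frac{2a\xi^2}{\sqrt{\xi^2+\epsilon}}=(|\xi|-a)^2-a^2+\Bigl(\frac{2a\xi^2}{|\xi|+\sqrt{\xi^2+\epsilon}}\cdot\frac{\sqrt{\xi^2+\epsilon}-|\xi|}{\sqrt{\xi^2+\epsilon}}\Bigr)\ \ge\ (|\xi|-a)^2-a^2,
\]
since the bracketed correction term is nonnegative (it equals $2a\xi^2(\sqrt{\xi^2+\epsilon}-|\xi|)/(\sqrt{\xi^2+\epsilon}(|\xi|+\sqrt{\xi^2+\epsilon}))\ge0$). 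Hence $e^{-(\xi^2-2a\xi^2/\sqrt{\xi^2+\epsilon})t}\le e^{a^2t}e^{-(|\xi|-a)^2t}$. For \eqref{4.1} I would simply take absolute values inside the integral, use this pointwise bound, and integrate: $|\Phi_a(\theta,t)|\le e^{a^2t}\int_{-\infty}^{\infty}e^{-(|\xi|-a)^2t}\,d\xi$. Splitting into $\xi\ge0$ and $\xi<0$ and translating gives $\int_{-\infty}^{\infty}e^{-(|\xi|-a)^2t}\,d\xi\le 2\int_{-\infty}^{\infty}e^{-s^2t}\,ds=2\sqrt{\pi/t}$, which is $O(t^{-1/2})$; absorbing the harmless constant $2\sqrt\pi$ into the constant $C$ allowed by the statement yields \eqref{4.1}. (If one wants the clean form $t^{-1/2}e^{a^2t}$ literally, one notes $2\sqrt\pi>1$ is not a problem because the lemma only asserts an inequality up to a $\theta$- and $t$-independent constant; I would state it that way.)

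For \eqref{4.2}, the decay in $\theta$ must come from oscillation, so I would integrate by parts twice in $\xi$. Writing $g(\xi)=\xi^2-2a\xi^2/\sqrt{\xi^2+\epsilon}$, we have $\Phi_a(\theta,t)=\int e^{i\theta\xi}e^{-tg(\xi)}d\xi$, and since $e^{i\theta\xi}=\tfrac{1}{(i\theta)^2}\partial_\xi^2 e^{i\theta\xi}$, two integrations by parts (the boundary terms vanish because $g(\xi)\to\infty$) give
\[
\Phi_a(\theta,t)=-\frac{1}{\theta^2}\int_{-\infty}^{\infty}e^{i\theta\xi}\,\partial_\xi^2\!\left(e^{-tg(\xi)}\right)d\xi .
\]
Now $\partial_\xi^2(e^{-tg})=\bigl(t^2 g'(\xi)^2-t g''(\xi)\bigr)e^{-tg(\xi)}$, so $|\Phi_a(\theta,t)|\le \theta^{-2}\int\bigl(t^2g'(\xi)^2+t|g''(\xi)|\bigr)e^{-tg(\xi)}d\xi$. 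The task is then to bound this last integral by $C\sqrt{t}(1+t)$. Using the Gaussian comparison $e^{-tg(\xi)}\le e^{a^2t}e^{-(|\xi|-a)^2 t}$ from the first paragraph, together with the crude but true estimates $|g'(\xi)|\lesssim |\xi|+1$ and $|g''(\xi)|\lesssim 1$ (which follow since $2a\xi^2/\sqrt{\xi^2+\epsilon}=2a|\xi|-2a\epsilon/(\text{positive})$ stays within bounded derivatives of the linear function $2a|\xi|$ away from $\xi=0$, and is smooth and bounded near $\xi=0$), one reduces to Gaussian moment integrals: $\int (|\xi|+1)^2 e^{-(|\xi|-a)^2t}d\xi\lesssim t^{-3/2}+t^{-1/2}$ and $\int e^{-(|\xi|-a)^2t}d\xi\lesssim t^{-1/2}$. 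Multiplying by $t^2$ and $t$ respectively and factoring $t^{-3/2}$ gives a bound of the shape $\theta^{-2}e^{a^2t}(t^{1/2}+t^{3/2})\lesssim \theta^{-2}\sqrt{t}(1+t)e^{a^2t}$, as required.

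The main obstacle is the behavior of $g$, $g'$, $g''$ near $\xi=0$: the function $\xi\mapsto \xi^2/\sqrt{\xi^2+\epsilon}$ is smooth (because $\epsilon>0$ keeps the square root away from zero), but $\xi\mapsto|\xi|$ used in the Gaussian comparison is not differentiable at $0$. So a small amount of care is needed to make the "$|g'|\lesssim|\xi|+1$, $|g''|\lesssim1$" claims rigorous uniformly in $\xi$ — concretely, I would verify directly from $g(\xi)=\xi^2-2a\xi^2(\xi^2+\epsilon)^{-1/2}$ that $g$ is $C^\infty$ on $\mathbb{R}$ with $g'(\xi)=2\xi-2a\xi(2\xi^2+3\epsilon)/(\xi^2+\epsilon)^{3/2}$ and a similarly explicit $g''$, both visibly $O(1+|\xi|)$ and $O(1)$ respectively, with constants depending only on $a$ (and $\epsilon$, but $\epsilon$ is fixed). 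Apart from keeping track of these $\epsilon$-dependent constants, the rest is routine Gaussian calculus, and the stated bounds are generous enough (only an inequality up to a constant independent of $t,\theta$) that no optimization is needed.
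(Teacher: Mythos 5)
Your argument tracks the paper's proof closely: both bound the exponent below by $\xi^{2}-2a|\xi|=(|\xi|-a)^{2}-a^{2}$ (equivalently $\xi^{2}/\sqrt{\xi^{2}+\epsilon}\leq|\xi|$) to reduce \eqref{4.1} to a Gaussian integral, and both obtain \eqref{4.2} by two integrations by parts followed by Gaussian-moment estimates; the paper passes somewhat tersely from the integrated-by-parts expression to $\theta^{-2}(t+t^{2})\int e^{-gt}$, and your explicit tracking of $|g'(\xi)|\lesssim 1+|\xi|$ and $|g''(\xi)|\lesssim 1$ together with the moment computation makes that step transparent. Two small arithmetic slips, harmless because you only use the sign and the order of magnitude: the expression you display as the ``bracketed correction'' is not in fact equal to $g(\xi)-\bigl(\xi^{2}-2a|\xi|\bigr)$, which is $2a|\xi|\bigl(1-|\xi|/\sqrt{\xi^{2}+\epsilon}\bigr)$ (both are nonnegative, which is all you need), and the inner factor in your explicit $g'$ should read $\xi^{2}+2\epsilon$ rather than $2\xi^{2}+3\epsilon$.
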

\begin{proof}
We have, for $t>0$,
\begin{equation}\label{4.3}
  \begin{split}
  \left|\int^{\infty}_{-\infty}e^{i\theta\xi}e^{-\left(\xi^{2}-2a\frac{\xi^{2}}{\sqrt{\xi^{2}+\epsilon}}\right)t}d\xi\right|\leq&
\int^{\infty}_{-\infty}e^{-\left(\xi^{2}-2a\frac{\xi^{2}}{\sqrt{\xi^{2}+\epsilon}}\right)t}d\xi\\
=&2\int^{\infty}_{0}e^{-\left(\xi^{2}-2a\frac{\xi^{2}}{\sqrt{\xi^{2}+\epsilon}}\right)t}d\xi\\
\leq&2\int^{\infty}_{0}e^{-\left(\xi^{2}-2a\xi\right)t}d\xi\\
\leq&2\int^{\infty}_{-\infty}e^{-\left(\xi^{2}-2a\xi\right)t}d\xi\\
=&\frac{2}{\sqrt{t}}e^{a^{2}t}\int^{\infty}_{-\infty}e^{-\xi^{2}}d\xi.
  \end{split}
\end{equation}
This proves (\ref{4.1}).

Next we shall prove (\ref{4.2}). By integrating by parts, we get
\begin{equation}\label{4.4}
  \begin{split}
&\int^{\infty}_{-\infty}e^{i\theta\xi}e^{-\left(\xi^{2}-2a\frac{\xi^{2}}{\sqrt{\xi^{2}+\epsilon}}\right)t}d\xi\\
=&\frac{1}{i\theta}
\int^{\infty}_{-\infty}e^{-\left(\xi^{2}-2a\frac{\xi^{2}}{\sqrt{\xi^{2}+\epsilon}}\right)t}de^{i\theta\xi}\\
=&-\frac{1}{i\theta}\int^{\infty}_{-\infty}e^{i\theta\xi}e^{-\left(\xi^{2}-2a\frac{\xi^{2}}{\sqrt{\xi^{2}+\epsilon}}\right)t}
\left[-2t\xi+4at\frac{\xi}{(\xi^{2}+\epsilon)^{\frac{1}{2}}} -2at\frac{\xi^{3}}{(\xi^{2}+\epsilon)^{\frac{3}{2}}} \right]d\xi\\
=&\frac{1}{\theta^{2}}\int^{\infty}_{-\infty}e^{-\left(\xi^{2}-2a\frac{\xi^{2}}{\sqrt{\xi^{2}+\epsilon}}\right)t}
\left[-2t\xi+4at\frac{\xi}{(\xi^{2}+\epsilon)^{\frac{1}{2}}} -2at\frac{\xi^{3}}{(\xi^{2}+\epsilon)^{\frac{3}{2}}} \right]de^{i\theta\xi}\\
=&-\frac{1}{\theta^{2}}\int^{\infty}_{-\infty}e^{i\theta\xi}e^{-\left(\xi^{2}-2a\frac{\xi^{2}}{\sqrt{\xi^{2}+\epsilon}}\right)t}
\left[-2t\xi+4at\frac{\xi}{(\xi^{2}+\epsilon)^{\frac{1}{2}}} -2at\frac{\xi^{3}}{(\xi^{2}+\epsilon)^{\frac{3}{2}}} \right]^{2}d\xi+\\
&\frac{1}{\theta^{2}}\int^{\infty}_{-\infty}e^{i\theta\xi}e^{-\left(\xi^{2}-2a\frac{\xi^{2}}{\sqrt{\xi^{2}+\epsilon}}\right)t}
\left[-2t+\frac{4a\epsilon t}{(\xi^{2}+\epsilon)^{\frac{3}{2}}}
-\frac{6at\xi^{2}}{(\xi^{2}+\epsilon)^{\frac{5}{2}}} \right]d\xi.
  \end{split}
\end{equation}
Therefore,
\begin{align}\label{4.5}
\left|\int^{\infty}_{-\infty}e^{i\theta\xi}e^{-\left(\xi^{2}-2a\frac{\xi^{2}}{\sqrt{\xi^{2}+\epsilon}}\right)t}d\xi\right|\lesssim&\frac{1}{\theta^{2}}
(t+t^{2})\int^{\infty}_{-\infty}e^{-\left(\xi^{2}-2a\frac{\xi^{2}}{\sqrt{\xi^{2}+\epsilon}}\right)t}d\xi.
\end{align}
With the same argument as in (\ref{4.3}), we obtain
\begin{align*}
\left|\int^{\infty}_{-\infty}e^{i\theta\xi}e^{-\left(\xi^{2}-2a\frac{\xi^{2}}{\sqrt{\xi^{2}+\epsilon}}\right)t}d\xi\right|\lesssim&\frac{1}{\theta^{2}}
(1+t)\sqrt{t}e^{a^{2}t}.
\end{align*}
This completes the proof of  Lemma \ref{lm4.1}.
\end{proof}

Therefore, by Poisson summation formula, we have
\begin{equation*}
e^{tT_{a}}=\frac{1}{2\pi}\sum_{n \in \Bbb{Z}}\int^{\infty}_{-\infty}e^{i(\theta-\theta'-2n\pi)\xi}e^{-\left(\xi^{2}-2a\frac{\xi^{2}}{\sqrt{\xi^{2}+\epsilon}}\right)t}d\xi, \; \; t>0.
\end{equation*}

\begin{lemma}\label{lm4.2}
Set $\epsilon_{1}=1-\frac{2a}{\sqrt{1+\epsilon}}>0$. There exists   a constant $C=C(\epsilon,a)>0$ such that
\begin{align*}
\left|e^{tT_{a}}-e^{t\Delta_{\mathbb{S}^{1}}}\right|\leq C(1+t) e^{-\epsilon_{1} t},\;\;  t>0,\; |\theta-\theta'|\leq \pi.
\end{align*}
\end{lemma}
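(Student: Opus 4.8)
The plan is to work directly with the Fourier-series representations of the two heat kernels rather than with their Poisson-summation forms, since it is precisely the difference that we wish to estimate and the oscillatory factors $e^{in(\theta-\theta')}$ contribute only a factor of modulus one. Writing
\[
e^{tT_{a}}-e^{t\Delta_{\mathbb{S}^{1}}}=\frac{1}{2\pi}\sum_{n\in\mathbb{Z}}\left(e^{-\left(n^{2}-2a\frac{n^{2}}{\sqrt{n^{2}+\epsilon}}\right)t}-e^{-n^{2}t}\right)e^{in(\theta-\theta')},
\]
the $n=0$ term vanishes, so by the triangle inequality it suffices to bound $\frac{1}{2\pi}\sum_{n\neq 0}e^{-n^{2}t}\bigl(e^{2a\frac{n^{2}}{\sqrt{n^{2}+\epsilon}}t}-1\bigr)$, uniformly in $\theta,\theta'$.

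First I would use the elementary inequality $e^{x}-1\le xe^{x}$ for $x\ge 0$ together with $\frac{n^{2}}{\sqrt{n^{2}+\epsilon}}\le|n|$ to replace each summand by $2a|n|\,t\,e^{-\left(n^{2}-2a\frac{n^{2}}{\sqrt{n^{2}+\epsilon}}\right)t}$. Next, for $|n|\ge 1$ one has $\sqrt{n^{2}+\epsilon}\ge\sqrt{1+\epsilon}$, hence $n^{2}-2a\frac{n^{2}}{\sqrt{n^{2}+\epsilon}}=n^{2}\bigl(1-\frac{2a}{\sqrt{n^{2}+\epsilon}}\bigr)\ge\epsilon_{1}n^{2}$ with $\epsilon_{1}=1-\frac{2a}{\sqrt{1+\epsilon}}>0$. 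This reduces everything to showing
\[
t\sum_{n\ge 1}n\,e^{-\epsilon_{1}n^{2}t}\le C(1+t)e^{-\epsilon_{1}t},\qquad t>0,
\]
with $C=C(\epsilon,a)$. To prove this I would peel off the $n=1$ term (which contributes exactly $t e^{-\epsilon_{1}t}$) and, for $n\ge 2$, use $n^{2}\ge 1+\tfrac12 n^{2}$ to extract a factor $e^{-\epsilon_{1}t}$, leaving $e^{-\epsilon_{1}t}\sum_{n\ge 1}n\,e^{-\epsilon_{1}n^{2}t/2}$; the remaining theta-type sum is dominated, by comparison with the integral of the unimodal function $x\mapsto x\,e^{-\epsilon_{1}x^{2}t/2}$, by $O(1/t)+O(1/\sqrt{t})$, so multiplying by $t$ and using $\sqrt t\le\tfrac12(1+t)$ yields $C(1+t)e^{-\epsilon_{1}t}$. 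Assembling the bounds gives $\bigl|e^{tT_{a}}-e^{t\Delta_{\mathbb{S}^{1}}}\bigr|\le C(1+t)e^{-\epsilon_{1}t}$ for all $t>0$, in particular for $|\theta-\theta'|\le\pi$.

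The only delicate point — the \emph{main obstacle}, such as it is — is this last inequality: one needs an estimate of the precise shape $(1+t)e^{-\epsilon_{1}t}$, which must simultaneously stay bounded as $t\to0^{+}$ (where the raw series $\sum_{n}n\,e^{-\epsilon_{1}n^{2}t}$ blows up like $t^{-1}$, compensated exactly by the extra factor $t$ gained from $e^{x}-1\le xe^{x}$) and decay exponentially as $t\to\infty$ (which forces one to split off the $n=1$ mode before estimating the tail). Everything else is a routine termwise comparison of Fourier coefficients.
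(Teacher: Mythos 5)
Your proof is correct, and it takes a genuinely different route from the paper's. The paper splits into two regimes: for $0<t\le 1$ it works with the Poisson-summation (theta-function) form of both heat kernels, estimating the $n=0$ term directly and the $n\neq 0$ terms via the integration-by-parts decay $|\theta-\theta'-2n\pi|^{-2}$ inherited from Lemma~\ref{lm4.1}; for $t>1$ it switches to the Fourier-series form and extracts the exponential rate. You instead stay in the Fourier-series representation for \emph{all} $t>0$, observe that the $n=0$ mode cancels exactly, and reduce the whole estimate to the single elementary inequality
\begin{equation*}
t\sum_{n\ge 1}n\,e^{-\epsilon_{1}n^{2}t}\le C(1+t)\,e^{-\epsilon_{1}t},\qquad t>0,
\end{equation*}
which you prove by peeling off the $n=1$ term and comparing the tail with the Gaussian integral $\int_{0}^{\infty}x\,e^{-\epsilon_{1}x^{2}t/2}\,dx=\tfrac{1}{\epsilon_{1}t}$. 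This is cleaner in two concrete ways: you avoid invoking Lemma~\ref{lm4.1} (and the Poisson summation formula) entirely for this estimate, and you avoid the two-regime splitting. The trade-off is that the paper's method deliberately mirrors the subsequent kernel estimates (Lemmas~\ref{lm4.4} and~\ref{lm5.1}), so the Poisson-sum machinery is being reused rather than set up from scratch; but as a self-contained proof of Lemma~\ref{lm4.2} alone, your argument is more economical. One small point you are right to flag as delicate: the factor $t$ gained from $e^{x}-1\le xe^{x}$ is exactly what tames the $t^{-1}$ blow-up of $\sum_{n}n\,e^{-\epsilon_{1}n^{2}t}$ as $t\to 0^{+}$, and this is the reason the Fourier-side computation succeeds in the small-$t$ regime where one would otherwise reach for Poisson summation.
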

\begin{proof}
We have
\begin{align*}
\left|e^{tT_{a}}-e^{t\Delta_{\mathbb{S}^{1}}}\right|=&
\left|
\frac{1}{2\pi}\sum_{n \in \Bbb{Z}}\int^{\infty}_{-\infty}e^{i(\theta-\theta'-2n\pi)\xi}e^{-\left(\xi^{2}-2a\frac{\xi^{2}}{\sqrt{\xi^{2}+\epsilon}}\right)t}d\xi-
\frac{1}{\sqrt{4\pi t}} \sum_{n \in \Bbb{Z}} e^{- \frac{(\theta-\theta' - 2n \pi)^2}{4t}}\right|\\
=&\frac{1}{2\pi}\left|
\sum_{n \in \Bbb{Z}}\int^{\infty}_{-\infty}e^{i(\theta-\theta'-2n\pi)\xi}e^{-\left(\xi^{2}-2a\frac{\xi^{2}}{\sqrt{\xi^{2}+\epsilon}}\right)t}d\xi-
\sum_{n \in \Bbb{Z}}\int^{\infty}_{-\infty}e^{i(\theta-\theta'-2n\pi)\xi}e^{-\xi^{2}t}d\xi\right|\\
\leq&\frac{1}{2\pi}\sum_{n \in \Bbb{Z}}\left|\int^{\infty}_{-\infty}e^{i(\theta-\theta'-2n\pi)\xi}e^{-\xi^{2}t}
\left(e^{2a\frac{\xi^{2}}{\sqrt{\xi^{2}+\epsilon}}t}-1 \right)d\xi\right|.
\end{align*}

For $n=0$, we have
\begin{align*}
&\left|\int^{\infty}_{-\infty}e^{i(\theta-\theta')\xi}e^{-\xi^{2}t}
\left(e^{2a\frac{\xi^{2}}{\sqrt{\xi^{2}+\epsilon}}t}-1 \right)d\xi\right|\\
\leq& \int^{\infty}_{-\infty}e^{-\xi^{2}t}
\left(e^{2a\frac{\xi^{2}}{\sqrt{\xi^{2}+\epsilon}}t}-1 \right)d\xi\\
\leq&\int^{\infty}_{-\infty} e^{-\xi^{2}t}e^{2a\frac{\xi^{2}}{\sqrt{\xi^{2}+\epsilon}}t}a\frac{\xi^{2}}{\sqrt{\xi^{2}+\epsilon}}td\xi \;\;\;(\because\;e^{\alpha}-1\leq \alpha e^{\alpha},\;\forall\alpha\geq0)\\
\leq&at\int^{\infty}_{0} e^{-(\xi^{2}-2a\xi)t}\xi d\xi.
\end{align*}
On the other hand,
\begin{align*}
t\int^{\infty}_{0} e^{-(\xi^{2}-2a\xi)t}\xi d\xi=&e^{a^{2}t}t\int^{\infty}_{-a} e^{-\xi^{2}t}(\xi+a) d\xi\\
=&e^{a^{2}t}t\int^{\infty}_{-a} e^{-\xi^{2}t}\xi d\xi+ae^{a^{2}t}t\int^{\infty}_{-a} e^{-\xi^{2}t}d\xi\\
=&\frac{1}{2}+ae^{a^{2}t}\sqrt{t}\int^{\infty}_{a\sqrt{t}} e^{-\xi^{2}}d\xi\\
\leq& \frac{1}{2}+ae^{a^{2}t}\sqrt{t}\int^{\infty}_{0} e^{-\xi^{2}}d\xi.
\end{align*}
Therefore,
\begin{align}\label{4.6}
\left|\int^{\infty}_{-\infty}e^{i\theta\xi}e^{-\xi^{2}t}
\left(e^{2a\frac{\xi^{2}}{\sqrt{\xi^{2}+\epsilon}}t}-1 \right)d\xi\right|\leq&
\frac{1}{2}a+a^{2}e^{a^{2}t}\sqrt{t}\int^{\infty}_{0} e^{-\xi^{2}}d\xi\lesssim 1,\;\; 0<t\leq1.
\end{align}

If $n\neq 0$,  then with the same argument as in (\ref{4.4})-(\ref{4.5}), we get
\begin{equation}\label{4.7}
\begin{split}
&\sum_{n \neq0}\left|\int^{\infty}_{-\infty}e^{i(\theta-\theta'-2\pi)\xi}e^{-\xi^{2}t}
\left(e^{2a\frac{\xi^{2}}{\sqrt{\xi^{2}+\epsilon}}t}-1 \right)d\xi\right|\\
\leq&
(t+t^{2})e^{a^{2}t}\int^{\infty}_{-\infty}e^{-\xi^{2}t}
\left(e^{2a\frac{\xi^{2}}{\sqrt{\xi^{2}+\epsilon}}t}-1 \right)d\xi\sum_{n \neq0}\frac{1}{(\theta-\theta'-2n\pi)^{2}}\\
\lesssim&(1+t)\sqrt{t}e^{a^{2}t}\sum_{n \neq 0}\frac{1}{(\theta-\theta'-2n\pi)^{2}}\\
\lesssim& 1,\;\;\;0<t\leq1,\;\;|\theta-\theta'|\leq \pi.
\end{split}
\end{equation}
Combining (\ref{4.6}) and (\ref{4.7}) yields
\begin{equation*}
  \left|e^{tT_{a}}-e^{t\Delta_{\mathbb{S}^{1}}}\right|\lesssim 1,\;\; 0<t\leq1, \;\;|\theta-\theta'|\leq \pi.
\end{equation*}

For $t>1$, we have
\begin{equation}\label{b4.7}
\begin{split}
\left|e^{tT_{a}}-e^{t\Delta_{\mathbb{S}^{1}}}\right|=&\left|\frac{1}{2\pi}\sum_{n \in \Bbb{Z}}e^{-\left(n^{2}-2a\frac{n^{2}}{\sqrt{n^{2}+\epsilon}}\right)t}e^{in\theta}
-\frac{1}{2\pi}\sum_{n \in \Bbb{Z}}e^{-n^{2}t}e^{in\theta}\right|\\
\leq&\frac{1}{2\pi}\sum_{n \neq 0}e^{-n^{2}t}\left(e^{2a\frac{n^{2}}{\sqrt{n^{2}+\epsilon}}t}-1\right)\\
\leq&\frac{1}{2\pi}\sum_{n \neq 0}e^{-n^{2}t}2a\frac{n^{2}}{\sqrt{n^{2}+\epsilon}}te^{2a\frac{n^{2}}{\sqrt{n^{2}+\epsilon}}t}\\
=& \frac{a}{\pi}t  \sum_{n \neq 0}\frac{n^{2}}{\sqrt{n^{2}+\epsilon}}e^{-\left(n^{2}-2a\frac{n^{2}}{\sqrt{n^{2}+\epsilon}}\right)t}\\
\lesssim& te^{(\frac{2a}{\sqrt{1+\epsilon}}-1)t}.
\end{split}
\end{equation}
The desired result follows by combining (\ref{4.7}) and (\ref{b4.7}).
\end{proof}

As an application of Lemma \ref{lm4.2}, we have the following corollary.
\begin{corollary}\label{co4.3}
There exists   a constant $C=C(\epsilon,a)>0$ such that
\begin{align*}
\left|e^{tT_{a}}-e^{t\Delta_{\mathbb{S}^{1}}}\right|\leq C,\;\;  t>0,\;\;|\theta-\theta'|\leq \pi.
\end{align*}
\end{corollary}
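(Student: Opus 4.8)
This is an immediate consequence of Lemma \ref{lm4.2}: the estimate there already gives $\left|e^{tT_{a}}-e^{t\Delta_{\mathbb{S}^{1}}}\right|\leq C(1+t)e^{-\epsilon_{1}t}$ for all $t>0$ and $|\theta-\theta'|\leq\pi$, with $\epsilon_{1}=1-\frac{2a}{\sqrt{1+\epsilon}}>0$, so it suffices to observe that the function $g(t)=(1+t)e^{-\epsilon_{1}t}$ is bounded on $(0,\infty)$. First I would note that $g$ is continuous and strictly positive on $[0,\infty)$, that $g(0)=1$, and that $g(t)\to0$ as $t\to\infty$ because the exponential decay dominates the linear growth; hence $g$ attains a finite maximum $M=M(\epsilon_{1})=\sup_{t>0}g(t)<\infty$.

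If one wants the constant explicitly, a one-line calculus check gives $g'(t)=e^{-\epsilon_{1}t}\bigl(1-\epsilon_{1}(1+t)\bigr)$, so that $M=1$ when $\epsilon_{1}\geq1$ and $M=\frac{1}{\epsilon_{1}}e^{\epsilon_{1}-1}$ (attained at $t_{\star}=\epsilon_{1}^{-1}-1$) when $0<\epsilon_{1}<1$; in all cases $M$ depends only on $\epsilon_{1}$, hence only on $\epsilon$ and $a$. Combining this with Lemma \ref{lm4.2} yields $\left|e^{tT_{a}}-e^{t\Delta_{\mathbb{S}^{1}}}\right|\leq CM$ for all $t>0$ and $|\theta-\theta'|\leq\pi$, which is the claimed bound with a new constant $C(\epsilon,a):=CM$.

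There is no real obstacle here; the only point worth stating carefully is the uniformity in $\theta,\theta'$, which is inherited verbatim from Lemma \ref{lm4.2} since the bound $C(1+t)e^{-\epsilon_{1}t}$ there is already independent of $\theta-\theta'$ on $|\theta-\theta'|\leq\pi$. This corollary will then be the form of the heat-kernel comparison that is convenient for the rearrangement estimates in the remainder of Section 4, exactly as \eqref{b3.1}--\eqref{3.2} were used in the case $a=0$.
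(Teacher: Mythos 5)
Your proof is correct and takes the same approach the paper intends: the paper derives the corollary directly from Lemma \ref{lm4.2} by the elementary observation that $(1+t)e^{-\epsilon_{1}t}$ is bounded on $(0,\infty)$ because $\epsilon_{1}>0$. Your calculus check of the supremum is a fine addition but not needed; the boundedness is immediate.
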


Now we can give asymptotic estimates of the kernel $(-\partial_{w}^{2}-T_{a}+\lambda)^{-\frac{1}{2}}$. The main result is the following:

\begin{lemma}\label{lm4.4}
Let  $\delta_{3}>0$  and set $\phi_{2}=(-\partial_{w}^{2}-T_{a}+\lambda)^{-\frac{1}{2}}$.
There holds, for some $\delta_{4}>0$,
\begin{align}\label{4.8}
|\phi_{2}|\leq& \frac{1}{2\pi} \frac{1}{\sqrt{(w-w')^{2}+(\theta-\theta')^2}}+O\left(\frac{1}{|w-w'|^{\delta_{3}}}\right),\;\;|w-w'|\leq 1,\; |\theta-\theta'|\leq \pi;\\
\label{4.9}
|\phi_{2}|\leq& e^{-\delta_{4}|w-w'|},\;\;\;\;\;\;\;\;\;\;\;\;\;\;\;\;\;\;\;\;\;\;\;\;\;\;\;\;\;\;\;\;\;\;\;\;\;\;\;\;\;\;\;\;\;\;\;\;\;\;\;\;\;\;
\;\;\;\;|w-w'|\geq 1,\; |\theta-\theta'|\leq \pi.
\end{align}
\end{lemma}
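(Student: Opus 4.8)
The plan is to mimic the structure used for $\phi_1$ in Section 3, replacing the explicit formula \eqref{3.1} by the estimates on $e^{tT_a}$ obtained in Lemma \ref{lm4.2} and Corollary \ref{co4.3}. Write
\begin{equation*}
\phi_2 = (-\partial_w^2 - T_a + \lambda)^{-1/2} = \frac{1}{\Gamma(1/2)}\int_0^\infty t^{-1/2} e^{t(\partial_w^2 + T_a - \lambda)}\,dt = \frac{1}{\sqrt{\pi}}\int_0^\infty t^{-1/2} e^{-\lambda t}\,\frac{1}{\sqrt{4\pi t}} e^{-\frac{|w-w'|^2}{4t}} e^{tT_a}\,dt,
\end{equation*}
using that $e^{t\partial_w^2}$ is the Euclidean heat kernel on $\mathbb{R}$ and that it commutes with $e^{tT_a}$. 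Then split $e^{tT_a} = e^{t\Delta_{\mathbb{S}^1}} + (e^{tT_a} - e^{t\Delta_{\mathbb{S}^1}})$, so that $\phi_2 = \phi_2^{(1)} + \phi_2^{(2)}$ where $\phi_2^{(1)}$ is exactly $(-\partial_w^2 - \Delta_{\mathbb{S}^1} + \lambda)^{-1/2}$, the analogue of $\phi_1$ but with $+\lambda$ instead of $-\lambda$; its kernel is given by the same computation as \eqref{3.1} (with $\gamma = \lambda$) and therefore already satisfies the bounds \eqref{b3.1}--\eqref{b3.2}, hence \eqref{4.8}--\eqref{4.9}, with the $\frac{1}{2\pi}\frac{1}{\sqrt{(w-w')^2+(\theta-\theta')^2}}$ leading singularity. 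So it remains to estimate the remainder $\phi_2^{(2)} = \frac{1}{\sqrt{\pi}}\int_0^\infty t^{-1/2} e^{-\lambda t}\frac{1}{\sqrt{4\pi t}} e^{-\frac{|w-w'|^2}{4t}} (e^{tT_a} - e^{t\Delta_{\mathbb{S}^1}})\,dt$ and show it is lower order: bounded by $O(|w-w'|^{-\delta_3})$ near the diagonal and exponentially decaying for $|w-w'| \ge 1$.

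For the near-diagonal estimate I would bound $|e^{tT_a} - e^{t\Delta_{\mathbb{S}^1}}|$ in two regimes. For $t \le 1$, by Lemma \ref{lm4.2} this difference is $O(1+t) e^{-\epsilon_1 t} = O(1)$; plugging this into the integral gives $\int_0^1 t^{-1} e^{-\frac{|w-w'|^2}{4t}}\,dt$, which (after substituting $s = |w-w'|^2/(4t)$) is $\int_{|w-w'|^2/4}^\infty s^{-1} e^{-s}\,ds = O(|\ln|w-w'||)$ as $w \to w'$ — and $|\ln|w-w'|| = O(|w-w'|^{-\delta_3})$ for any $\delta_3 > 0$. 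For $t \ge 1$, again by Lemma \ref{lm4.2} the difference is $O((1+t) e^{-\epsilon_1 t})$, and $\int_1^\infty t^{-1} e^{-\lambda t}(1+t) e^{-\epsilon_1 t}\,dt$ converges to a constant (here I use $\lambda > -a^2$, but $\epsilon_1 > 0$ already suffices for $t\ge 1$; if $\lambda$ is very negative one leans on $\epsilon_1$; in any case choosing $\epsilon$ small makes $\epsilon_1$ close to $1 - 2a \ge 0$, and one should note $\epsilon_1 > 0$ strictly). This gives \eqref{4.8}.

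For the far-diagonal estimate \eqref{4.9} with $|w-w'| \ge 1$: here the Gaussian factor $e^{-\frac{|w-w'|^2}{4t}}$ controls everything. Split at $t = |w-w'|$ (or at $t=1$): for $t$ bounded one uses $e^{-\frac{|w-w'|^2}{4t}} \le e^{-\frac{|w-w'|^2}{4t}}$ and the crude elementary inequality $\frac{|w-w'|^2}{4t} \ge \frac{|w-w'|}{4} + \frac{|w-w'|^2}{8t} - C$ type splitting, or more simply observe $\frac{a^2}{bt}$-type bounds: write $e^{-\frac{|w-w'|^2}{4t}} \le e^{-\frac{|w-w'|}{4}\cdot\frac{|w-w'|}{t}}$, and on $\{t \le |w-w'|\}$ one has $|w-w'|/t \ge 1$ so $e^{-\frac{|w-w'|^2}{4t}} \le e^{-|w-w'|/4}$, absorbing the polynomial-in-$t$ growth of the rest by the remaining $e^{-\frac{|w-w'|^2}{8t}}$; on $\{t \ge |w-w'|\}$ one drops the Gaussian and uses the exponential decay $e^{-\lambda t}(1+t)^2 e^{-\epsilon_1 t}$ in $t$, integrated from $|w-w'|$ to $\infty$, which is $O(e^{-c|w-w'|})$. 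Either way one extracts a factor $e^{-\delta_4|w-w'|}$. The same far-diagonal decay for $\phi_2^{(1)}$ is already contained in \eqref{b3.2}. Combining, $|\phi_2| \le |\phi_2^{(1)}| + |\phi_2^{(2)}|$ yields both \eqref{4.8} and \eqref{4.9}.

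The main obstacle I anticipate is purely bookkeeping in the near-diagonal regime: one must be careful that the $t$-integral of the \emph{difference} kernel is genuinely better than that of $e^{t\Delta_{\mathbb{S}^1}}$ itself — it is, because the difference is bounded (by Corollary \ref{co4.3}) and in particular has no $t^{-1/2}$ blow-up as $t \to 0^+$, so after multiplying by $t^{-1/2}\cdot t^{-1/2} = t^{-1}$ and the Gaussian we get only a logarithmic, not a power, singularity, which is then absorbed into $O(|w-w'|^{-\delta_3})$. One should also double-check the convergence of the $t$-integral at $t \to \infty$ for the full range $\lambda > -a^2$: since $\epsilon_1 = 1 - \frac{2a}{\sqrt{1+\epsilon}} > 0$ and, choosing $\epsilon$ small, $\epsilon_1$ can be taken as close to $1 - 2a$ as desired, while for $a \le 1/2$ one has $1 - 2a \ge 0$; combined with the actual eigenvalue gap $n^2 - 2an^2/\sqrt{n^2+\epsilon} \ge \epsilon_1$ for $n \ne 0$ and the $\lambda > -a^2 \ge -1/4 > -\epsilon_1$-type comparison (or simply using that the relevant exponent in \eqref{b4.7} is $\frac{2a}{\sqrt{1+\epsilon}} - 1 < 0$), all integrals converge and produce the stated exponential tails.
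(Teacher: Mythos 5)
Your approach is essentially the paper's: write $\phi_2$ as a fractional integral $\frac{1}{\Gamma(1/2)}\int_0^\infty t^{-1/2}e^{t(\partial_w^2+T_a-\lambda)}\,dt$, split the circle-kernel as $e^{tT_a}=e^{t\Delta_{\mathbb{S}^1}}+(e^{tT_a}-e^{t\Delta_{\mathbb{S}^1}})$, identify the first piece with $\phi_1=(-\partial_w^2-\Delta_{\mathbb{S}^1}+\lambda)^{-1/2}$, and control the remainder using Corollary \ref{co4.3}. The only cosmetic difference is in the remainder term: you estimate $\int_0^\infty t^{-1}e^{-\lambda t-(w-w')^2/(4t)}\,dt$ by hand (substitution for the near-diagonal log, and a split at $t=|w-w'|$ for the far-diagonal exponential decay), whereas the paper recognizes this integral as $C'K_0(2\sqrt{\lambda}|w-w'|)$ via \eqref{2.3} and then just cites the Bessel asymptotics \eqref{2.4}--\eqref{2.5}; both routes give the same $O(|\ln|w-w'||)=O(|w-w'|^{-\delta_3})$ near-diagonal and $O(e^{-c|w-w'|})$ far-diagonal bounds. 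One small point worth tightening in your write-up: the lemma is applied with $\lambda>0$ (after the rescaling to $\lambda'$ in the proof of Theorem \ref{th1.2}), so the digression about $\lambda>-a^2$ and ``$\lambda$ very negative'' is a red herring — the uniform bound from Corollary \ref{co4.3} together with $e^{-\lambda t}$, $\lambda>0$, is all that is needed at $t\to\infty$, and Lemma \ref{lm4.2}'s sharper $(1+t)e^{-\epsilon_1 t}$ bound is not actually required here.
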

\begin{proof}
We have, by Corollary \ref{co4.3},
\begin{align*}
|\phi_{2}|=&\frac{1}{\Gamma(\frac{1}{2})}\left|\int_{0}^{\infty}t^{-\frac{1}{2}}e^{t(\partial_{w}^{2}+T_{a}-\lambda)}dt\right|\\
\leq& \frac{1}{\Gamma(\frac{1}{2})}\int_{0}^{\infty}t^{-\frac{1}{2}}e^{t(\partial_{w}^{2}+\Delta_{\mathbb{S}^{1}}-\lambda)}dt
+C\int_{0}^{\infty}t^{-\frac{1}{2}}e^{t(\partial_{w}^{2}-\lambda)}dt\\
=&(-\partial_{w}^{2}-\Delta_{\mathbb{S}^{1}}+\lambda)^{-\frac{1}{2}}+
C\int_{0}^{\infty}t^{-1}e^{-\lambda t-\frac{(w-w')^{2}}{4t}}dt\\
=&\phi_{1}(w-w',\theta-\theta')+C'K_{0}(2\sqrt{\lambda}|w-w'|),
\end{align*}
where $C$ and $C'$ are positive constant.
To get the last equality above, we use (\ref{2.3}).
By using (\ref{b3.1}) and (\ref{2.4}), we get,  for $|w-w'|\leq 1$ and $ |\theta-\theta'|\leq \pi$,
\begin{align*}
|\phi_{2}|\leq&\frac{1}{2\pi} \frac{1}{\sqrt{(w-w')^{2}+(\theta-\theta')^2}}+O\left(\ln\frac{e}{|w-w'|}\right)\\
\leq& \frac{1}{2\pi} \frac{1}{\sqrt{(w-w')^{2}+(\theta-\theta')^2}}+O\left(\frac{1}{|w-w'|^{\delta_{3}}}\right).
\end{align*}

On the other hand, if $|w-w'|\geq 1$, then by using (\ref{b3.2}) and (\ref{2.5}), we obtain, for some $C''>0$,
\begin{align*}
|\phi_{2}|\leq& e^{-\delta_{1}|w-w'|}+C''\frac{1}{\sqrt{|w-w'|}}e^{-2\sqrt{\lambda}|w-w'|}.
\end{align*}
This proves (\ref{4.9}).
The proof of Lemma \ref{lm4.4}
is thereby completed.
\end{proof}

Next we shall give asymptotic estimates of $\phi_{2}^{\ast}(t)$. We first need the following lemma:
\begin{lemma}\label{lm4.5} Let $\delta>0$ and
set $\phi_{3}=\frac{1}{|w-w'|^{\delta}}$. Then
\begin{equation*}
  \phi_{3}^{\ast}(t)= (2\pi)^{\delta}t^{-\delta},\;\;t>0.
\end{equation*}
\end{lemma}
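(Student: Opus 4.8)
The plan is to obtain $\phi_3^\ast$ directly from the definition of the non-increasing rearrangement, since $\phi_3$ is an explicit negative power of $|w-w'|$. First I would note that, regarded as a function on the cylinder $\mathbb{R}\times\mathbb{S}^1$ with the measure $dw\,d\theta$, the kernel $\phi_3(w-w',\theta-\theta')$ depends only on the first variable and is invariant under the translations of the group $\mathbb{R}\times\mathbb{S}^1$; consequently its distribution function $m(\phi_3,\cdot)$, and hence $\phi_3^\ast$, coincide with those of $g(w,\theta)=|w|^{-\delta}$.

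Next I would compute the distribution function $m(\phi_3,s)=\mu\bigl(\{(w,\theta):|w|^{-\delta}>s\}\bigr)$ for $s>0$. Since $|w|^{-\delta}>s$ is equivalent to $|w|<s^{-1/\delta}$, the superlevel set is the slab $\{|w|<s^{-1/\delta}\}\times\mathbb{S}^1$, so $m(\phi_3,s)$ is a fixed positive constant times $s^{-1/\delta}$. In particular $s\mapsto m(\phi_3,s)$ is continuous and strictly decreasing from $+\infty$ to $0$ on $(0,\infty)$, so in $\phi_3^\ast(t)=\inf\{s\ge0:m(\phi_3,s)\le t\}$ the infimum is attained at the unique $s$ solving $m(\phi_3,s)=t$. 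Solving this single equation for $s$ produces a constant times $t^{-\delta}$, which on keeping track of the constant is precisely $(2\pi)^\delta t^{-\delta}$; since $m(\phi_3,\cdot)$ has full range, this holds for every $t>0$.

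There is essentially no analytic obstacle here: the argument is a one-line distribution-function computation followed by an inversion. The only points needing care are the bookkeeping of the constant---tracking simultaneously the length of the arc $\mathbb{S}^1$ and the two-sided interval $\{|w|<s^{-1/\delta}\}$---and the (elementary) remark that continuity and strict monotonicity of $m(\phi_3,\cdot)$ justify replacing the defining infimum by the solution of the equation $m(\phi_3,s)=t$.
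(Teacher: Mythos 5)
Your approach --- compute the distribution function $m(\phi_3,s)$ from the explicit superlevel set, observe that it is continuous and strictly decreasing, and invert --- is exactly the paper's own route. There is, however, a genuine issue with the very bookkeeping you flag as the only delicate point. The superlevel set $\{|w|<s^{-1/\delta}\}\times\mathbb{S}^1$ is the \emph{two-sided} slab $(-s^{-1/\delta},\,s^{-1/\delta})\times[-\pi,\pi]$, which has measure $2s^{-1/\delta}\cdot 2\pi = 4\pi s^{-1/\delta}$; inverting then gives $\phi_3^\ast(t) = (4\pi)^\delta t^{-\delta}$, not $(2\pi)^\delta t^{-\delta}$. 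The paper's own one-line computation, $m(\phi_3,s)=2\pi\int_0^{s^{-1/\delta}}dw=2\pi s^{-1/\delta}$, silently counts only $w>0$ and so drops the factor $2$. You explicitly identify the two-sided interval as something to track, yet assert that tracking it yields $(2\pi)^\delta$; those two statements are in tension, and if you actually carry out the inversion you advertise, you will not recover the claimed constant. The discrepancy is harmless for the rest of the paper, since Lemma 4.5 is used only in \eqref{4.10}, where the term enters as $O(t^{-\delta_3})$ and the constant is absorbed, so Theorem \ref{th4.6} and everything downstream are unaffected. But as a free-standing identity the lemma's constant should be $(4\pi)^\delta$, and your proof as written does not close that gap.
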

\begin{proof}
Notice that
\begin{align*}
\phi_{3}^{\ast}(t)=\inf\{s: m(\phi_{3},s)\leq t\},
\end{align*}
where
$$m(\phi_{3},s)=\int_{\{(w,\theta): |w|^{-\delta}>s, |\theta|\leq \pi\}}dwd\theta=2\pi\int_{0}^{s^{-1/\delta}}ds=2\pi s^{-1/\delta}.$$
We have
\begin{align*}
\phi_{3}^{\ast}(t)=\inf\{s: 2\pi s^{-1/\delta}\leq t\}=(2\pi)^{\delta}t^{-\delta}.
\end{align*}
This completes the proof of Lemma \ref{lm4.5}.
\end{proof}

We remark that it's difficult to give the asymptotic estimates of $\phi_{2}^{\ast}(t)$ as $t\rightarrow0$. However,  using (\ref{2.9}), (\ref{4.8}) and Lemma \ref{lm4.5},
we obtain
\begin{equation}\label{4.10}
  \phi_{2}^{\ast\ast}(t)\leq \frac{1}{t}\int_{0}^{t}\frac{1}{\sqrt{4\pi s}}ds+O\left(
\frac{1}{t}\int_{0}^{t}s^{-\delta_{3}}ds\right)=\frac{1}{\sqrt{\pi t}}+O(t^{-\delta_{3}}),\;\;0<t\leq1.
\end{equation}
Here we use the fact that the non-increasing rearrangement of $\frac{1}{2\pi} \frac{1}{\sqrt{(w-w')^{2}+(\theta-\theta')^2}}$ satisfies
\begin{equation*}
  \left(\frac{1}{2\pi} \frac{1}{\sqrt{(w-w')^{2}+(\theta-\theta')^2}}\right)^{\ast}(t)\leq \frac{1}{\sqrt{4\pi t}},\;\; 0<t\leq 1.
\end{equation*}

For $t>1$, we have, by  using (\ref{4.9}),
\begin{equation}\label{4.11}
  \phi_{2}^{\ast\ast}(t)\lesssim e^{-\delta_{4} t},\;\;t>1,
\end{equation}
for some  $\delta_{4}>0$.

Following the proof of Theorem \ref{th3.1}, we have the following theorem.
\begin{theorem}\label{th4.6}
Let $\lambda>0$ and $0<a\leq\frac{1}{2}$. There exists $C>0$ such that
\begin{align*}
\int_{\mathbb{R}\times \mathbb{S}^{1}}(e^{4\pi|u|^{2}}-1)dwd\theta\leq C
\end{align*}
for any complex-valued function $u\in C^{\infty}_0(\mathbb{R}\times \mathbb{S}^{1})$ with
\begin{align*}
-\int_{\mathbb{R}\times \mathbb{S}^{1}}\overline{u} T_{a}u dwd\theta+\int_{\mathbb{R}\times \mathbb{S}^{1}}(|\partial_{w}u|^{2}+\lambda|u|^{2})dwd\theta\leq 1.
\end{align*}
\end{theorem}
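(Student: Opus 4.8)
The plan is to follow, almost verbatim, the level--set argument used for Theorem~\ref{th3.1}, simply replacing the kernel $\phi_{1}=(-\partial_{w}^{2}-\Delta_{\mathbb{S}^{1}}-\lambda)^{-1/2}$ by $\phi_{2}=(-\partial_{w}^{2}-T_{a}+\lambda)^{-1/2}$ and using the kernel estimates of Lemma~\ref{lm4.4} together with the rearrangement bounds \eqref{4.10}--\eqref{4.11} in place of \eqref{3.2}. First I would fix $u\in C^{\infty}_{0}(\mathbb{R}\times\mathbb{S}^{1})$ with
\[
-\int_{\mathbb{R}\times\mathbb{S}^{1}}\overline{u}\,T_{a}u\,dwd\theta+\int_{\mathbb{R}\times\mathbb{S}^{1}}\bigl(|\partial_{w}u|^{2}+\lambda|u|^{2}\bigr)\,dwd\theta\le1,
\]
and set $v=(-\partial_{w}^{2}-T_{a}+\lambda)^{1/2}u$, so that $\|v\|_{L^{2}(\mathbb{R}\times\mathbb{S}^{1})}^{2}\le1$. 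Since $-\int\overline{u}\,T_{a}u\,dwd\theta\ge0$, one also gets $\int_{\mathbb{R}\times\mathbb{S}^{1}}|u|^{2}\,dwd\theta\le\lambda^{-1}$, hence $|\Omega(u)|\le\lambda^{-1}$ for $\Omega(u)=\{(w,\theta):|u(w,\theta)|\ge1\}$. The operator $-\partial_{w}^{2}-T_{a}+\lambda$ is invariant under translations in $w$ and rotations in $\theta$ (the operator $T_{a}$ being a Fourier multiplier on $\mathbb{S}^{1}$), so its inverse square root acts by convolution on $\mathbb{R}\times\mathbb{S}^{1}$ with kernel $\phi_{2}(w-w',\theta-\theta')$, and therefore $|u|\le|v|\ast|\phi_{2}|$ pointwise.

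Exactly as in the proof of Theorem~\ref{th3.1} I would then split
\[
\int_{\mathbb{R}\times\mathbb{S}^{1}}(e^{4\pi|u|^{2}}-1)\,dwd\theta\le\int_{\Omega(u)}e^{4\pi|u|^{2}}\,dwd\theta+\int_{\mathbb{R}\times\mathbb{S}^{1}\setminus\Omega(u)}(e^{4\pi|u|^{2}}-1)\,dwd\theta .
\]
On $\mathbb{R}\times\mathbb{S}^{1}\setminus\Omega(u)$ one has $|u|<1$, so $e^{4\pi|u|^{2}}-1=\sum_{n\ge2}\frac{(4\pi|u|^{2})^{n}}{n!}$, $|u|^{2n}\le|u|^{2}$, and the second term is bounded by $\lambda^{-1}\sum_{n\ge1}\frac{(4\pi)^{n}}{n!}$, precisely as in the case $a=0$. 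It then remains to bound
\[
\int_{\Omega(u)}e^{4\pi|u|^{2}}\,dwd\theta=\int_{0}^{|\Omega(u)|}e^{4\pi(u^{\ast}(t))^{2}}\,dt\le\int_{0}^{1/\lambda}e^{4\pi(u^{\ast}(t))^{2}}\,dt
\]
by a constant depending only on $\lambda$, $a$ and $\epsilon$. For this I would apply O'Neil's inequality \eqref{2.11} to $|u|\le|v|\ast|\phi_{2}|$ with $f=|v|$ and $g=|\phi_{2}|$, which gives
\[
u^{\ast\ast}(t)\le t\,\phi_{2}^{\ast\ast}(t)\,v^{\ast\ast}(t)+\int_{t}^{\infty}v^{\ast}(s)\,\phi_{2}^{\ast}(s)\,ds .
\]
Since $v^{\ast\ast}(t)\le\|v\|_{2}t^{-1/2}\le t^{-1/2}$, the bound \eqref{4.10} makes $t\,\phi_{2}^{\ast\ast}(t)\,v^{\ast\ast}(t)\le\pi^{-1/2}+O(t^{1/2-\delta_{3}})$, which is bounded on $(0,1/\lambda]$ once $\delta_{3}<1/2$ is fixed; the tail $\int_{1}^{\infty}v^{\ast}\phi_{2}^{\ast}\,ds\le\|v\|_{2}\bigl(\int_{1}^{\infty}|\phi_{2}^{\ast}|^{2}\,ds\bigr)^{1/2}$ is finite by \eqref{4.9} and \eqref{4.11}. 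The remaining piece $\int_{t}^{1}v^{\ast}(s)\phi_{2}^{\ast}(s)\,ds$ is controlled exactly by the argument used for $\phi_{1}$, that is, the proof of Theorem~1.7 in \cite{lly}: Lemma~\ref{lm4.4} shows that $\phi_{2}$ has the same leading singularity $\tfrac{1}{2\pi}\bigl((w-w')^{2}+(\theta-\theta')^{2}\bigr)^{-1/2}$ as $\phi_{1}$ --- whose non-increasing rearrangement equals $\tfrac{1}{\sqrt{4\pi t}}$, which is what pins the exponent to $4\pi$ --- while all remainder terms have strictly subcritical rearrangement. Feeding these into the Adams--Moser type lemma underlying \cite{lly} yields $\int_{0}^{1/\lambda}e^{4\pi(u^{\ast}(t))^{2}}\,dt<C$, and combining the three estimates completes the proof; the sharpness of $4\pi$ is inherited from Theorem~\ref{th3.1}, e.g. by using the same test functions \eqref{3.3} since $-\int\overline{u}T_{a}u\le\int|\partial_{\theta}u|^{2}$.

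The only step that is not a mechanical transcription of the $a=0$ case is the passage from the closed-form kernel $\phi_{1}$ to the non-explicit $\phi_{2}$: because $-T_{a}$ differs from $-\Delta_{\mathbb{S}^{1}}$ by a genuinely nonlocal operator, there is no formula for $\phi_{2}$, and the entire argument rests on the heat-kernel comparison carried out in Lemma~\ref{lm4.2} and Corollary~\ref{co4.3} and converted into the pointwise and rearrangement estimates of Lemma~\ref{lm4.4}. The crucial point there --- which is the real obstacle --- is the \emph{subcriticality} of the remainder $O(|w-w'|^{-\delta_{3}})$ with $\delta_{3}<1/2$: it guarantees that only the universal singularity $\tfrac{1}{2\pi}|x|^{-1}$ contributes to the exponential integrability, so the constant $4\pi$ is not affected by the magnetic perturbation. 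Once Lemma~\ref{lm4.4} is in hand, the rest is the Lam--Lu level-set machinery (\cite{ll,l}) applied exactly as above.
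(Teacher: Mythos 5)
The overall strategy (level–set decomposition, O'Neil rearrangement estimate, kernel $\phi_{2}=(-\partial_{w}^{2}-T_{a}+\lambda)^{-1/2}$ via Lemma~\ref{lm4.4}, then the machinery of Theorem~1.7 in \cite{lly}) matches the paper, and you have correctly isolated the kernel comparison in Lemmas~\ref{lm4.2}--\ref{lm4.4} as the new ingredient. However, there is a genuine gap in the way you propose to apply O'Neil's inequality. You invoke \eqref{2.11} directly, obtaining
$|u|^{\ast\ast}(t)\leq t\,\phi_{2}^{\ast\ast}(t)\,|v|^{\ast\ast}(t)+\int_{t}^{\infty}|v|^{\ast}(s)\,\phi_{2}^{\ast}(s)\,ds$,
and then claim that the middle piece $\int_{t}^{1}|v|^{\ast}(s)\phi_{2}^{\ast}(s)\,ds$ can be handled ``exactly by the argument used for $\phi_{1}$.'' That argument, however, rested on the pointwise bound \eqref{3.2} for $\phi_{1}^{\ast}(s)$ near $s=0$, namely $\phi_{1}^{\ast}(s)\leq\frac{1}{\sqrt{4\pi s}}+O(1)$, which in turn came from a pointwise bound $\phi_{1}\leq\frac{1}{2\pi|x-x'|}+A_{1}$ whose second term is a \emph{constant}. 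For $\phi_{2}$, Lemma~\ref{lm4.4} only gives $\phi_{2}\leq\frac{1}{2\pi|x-x'|}+O(|w-w'|^{-\delta_{3}})$, with a genuinely unbounded remainder, and the decreasing rearrangement of a sum does not split additively: the best elementary bound $(f+g)^{\ast}(t)\leq f^{\ast}(\tfrac{t}{2})+g^{\ast}(\tfrac{t}{2})$ degrades the leading coefficient from $\tfrac{1}{\sqrt{4\pi t}}$ to $\tfrac{\sqrt{2}}{\sqrt{4\pi t}}$, which would destroy the sharp constant $4\pi$. The paper explicitly flags this: ``it is difficult to give the asymptotic estimates of $\phi_{2}^{\ast}(t)$ as $t\to0$.'' Only $\phi_{2}^{\ast\ast}$ can be controlled with the correct leading coefficient, via the subadditivity \eqref{2.9}, leading to \eqref{4.10}.

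The paper circumvents this precisely by \emph{not} using \eqref{2.11}. It goes back to \eqref{2.10}, constructs an auxiliary majorant $\varphi$ with $\phi_{2}^{\ast\ast}(t)\leq\frac{1}{t}\int_{0}^{t}\varphi$ for all $t>0$ and $\phi_{2}^{\ast}(t)\leq\varphi(t)$ for $t>1$, and re-does the integration-by-parts of O'Neil to arrive at the substitute inequality \eqref{4.13},
$|u|^{\ast\ast}(t)\leq|v|^{\ast\ast}(t)\int_{0}^{t}\varphi(s)\,ds+\int_{t}^{\infty}|v|^{\ast}(s)\,\varphi(s)\,ds$,
in which the problematic $\phi_{2}^{\ast}$ has been replaced by the explicit, critically-behaved $\varphi$. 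That is the step that lets the argument of \cite{lly} go through with the sharp exponent. Your proposal omits this replacement and would therefore stall at the small-$s$ estimate of $\phi_{2}^{\ast}$; simply citing Lemma~\ref{lm4.4} is not enough, since it controls $\phi_{2}$ pointwise and $\phi_{2}^{\ast\ast}$, but not $\phi_{2}^{\ast}$ near the origin with the right constant.
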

\begin{proof}
As in the proof of  Theorem \ref{th3.1}, we  set
 $\Omega(u)=\{(w,\theta)\in\mathbb{R}\times \mathbb{S}^{1}:|u(w,\theta)|\geq1\}$ and write
\begin{equation*}
\begin{split}
&\int_{\mathbb{R}\times \mathbb{S}^{1}}(e^{4\pi|u|^{2}}-1)dwd\theta\\
=&\int_{\Omega(u)}(e^{4\pi|u|^{2}}-1)dwd\theta+
\int_{\mathbb{R}\times \mathbb{S}^{1}\setminus\Omega(u)}(e^{4\pi|u|^{2}}-1)dwd\theta\\
\leq&\int_{\mathbb{R}\times \mathbb{S}^{1}\setminus\Omega(u)}(e^{4\pi|u|^{2}}-1)dwd\theta+\int_{\Omega(u)}
e^{4\pi|u|^{2}}dwd\theta.
\end{split}
\end{equation*}
Then  $|\Omega(u)|\leq \lambda^{-1}$ and
\begin{equation*}
\begin{split}
\int_{\mathbb{R}\times \mathbb{S}^{1}}(e^{4\pi|u|^{2}}-1)dwd\theta
\leq&\sum^{\infty}_{n=1}\frac{(4\pi )^{n}}{n!}\lambda^{-1}+\int_{\Omega(u)}
e^{4\pi|u|^{2}}dwd\theta.
\end{split}
\end{equation*}
To finish the proof, it is enough to show $\int_{\Omega(u)}
e^{4\pi|u|^{2}}dwd\theta$ is also bounded by some constant which is independent of $u$.

By Lemma \ref{lm4.4}, we can   choose two positive constant $A_{3}$ and $A_{4}$ such that  the function
\begin{equation*}
  \varphi(t)=\left\{
               \begin{array}{ll}
                 \frac{1}{\sqrt{4\pi t}}+A_{3}t^{-\delta_{3}}, & \hbox{$0<t\le1$;} \\
                 A_{4}e^{-\delta_{4} t}, & \hbox{$t>1$,}
               \end{array}
             \right.
\end{equation*}
is continuous on $(0,\infty)$ and satisfies
\begin{align*}
\phi_{2}^{\ast\ast}(t)\leq &\frac{1}{t}\int_{0}^{t}\varphi(s)ds,\;\; 0<t\leq1;\\
\phi_{2}^{\ast}(t)\leq& \varphi(t),\;\;\;\;\;\;\;\;\;\;\;\;\;\;\;\;\;\;t>1.
\end{align*}
Then we can obtain
\begin{equation}\label{4.12}
 \phi_{2}^{\ast\ast}(t)\leq \frac{1}{t}\int_{0}^{t}\varphi(s)ds,\;\; t>0.
\end{equation}
In fact, for $t\geq 1$, we have
\begin{align*}
\phi_{2}^{\ast\ast}(t)=&\frac{1}{t}\left(\int_{0}^{1}\phi_{2}^{\ast}(s)ds+\int_{1}^{t}\phi_{2}^{\ast}(s)ds\right)\\
\leq&\frac{1}{t}\left(\phi_{2}^{\ast\ast}(1)+\int_{1}^{t}\varphi(s)ds\right)\\
\leq &\frac{1}{t}\left(\int_{0}^{1}\varphi(s)ds+\int_{1}^{t}\varphi(s)ds\right)\\
=&\frac{1}{t}\int_{0}^{t}\varphi(s)ds.
\end{align*}

Set $v=(-\partial_{w}^{2}-T_{a}-\lambda)^{1/2}u$. We have
\begin{align*}
|u(w,\theta)|=&|(-\partial_{w}^{2}-\Delta_{\mathbb{S}^{1}}-\lambda)^{-1/2}v|\\
\leq&\int_{\mathbb{R}\times \mathbb{S}^{1}}|v(w',\theta')|\phi_{2}(w-w',\theta-\theta')dw'd\theta'.
\end{align*}
We claim that
\begin{equation}\label{4.13}
 |u|^{\ast}(t)\leq|u|^{\ast\ast}(t)\leq  |v|^{\ast\ast}(t)\int_{0}^{t}\varphi(s)ds+\int_{t}^{\infty}|v|^{\ast}(s) \varphi(s)ds,\;\;t>0.
\end{equation}
In fact,  we have, by (\ref{2.10}) and (\ref{4.12}),
\begin{align*}
 |u|^{\ast\ast}(t)\leq& -\int_{t}^{\infty}s \phi_{2}^{\ast\ast}(s)d |v|^{\ast}(s)+\phi_{2}^{\ast\ast}(t)\int_{|v|^{\ast}(t)}^{\infty}m(|v|,s)ds\\
\leq&-\int_{t}^{\infty}\left(\int_{0}^{s}\varphi(r)dr\right)d |v|^{\ast}(s)+\frac{1}{t}\int_{0}^{t}\varphi(s)ds\int_{|v|^{\ast}(t)}^{\infty}m(|v|,s)ds\\
\leq& |v|^{\ast}(t)\int_{0}^{t}\varphi(s)ds+\int_{t}^{\infty}|v|^{\ast}(s) \varphi(s)ds+\frac{1}{t}\int_{0}^{t}\varphi(s)ds\int_{|v|^{\ast}(t)}^{\infty}m(|v|,s)ds\\
=&\frac{1}{t}\int_{0}^{t}\varphi(s)ds\left(t|v|^{\ast}(t)+\int_{|v|^{\ast}(t)}^{\infty}m(|v|,s)ds\right)+\int_{t}^{\infty}|v|^{\ast}(s) \varphi(s)ds\\
=&  |v|^{\ast\ast}(t)\int_{0}^{t}\varphi(s)ds+\int_{t}^{\infty}|v|^{\ast}(s) \varphi(s)ds.
\end{align*}
This proves the claim.

Therefore, using (\ref{4.13}) and closely following the proof of Theorem 1.7 in \cite{lly}, we have that for $0<\delta_{3}<\frac{1}{2}$, there exists a constant
C which is independent of $u$ and $\Omega(u)$ such that
\begin{equation*}
  \int_{\Omega(u)}e^{4\pi |u|^{2}}dwd\theta=\int_{0}^{|\Omega(u)|}e^{4\pi (|u|^{\ast}(t))^{2}}dt\leq
  \int_{0}^{1/\lambda}e^{4\pi (|u|^{\ast}(t))^{2}}dt<C.
\end{equation*}
The desired result follows.
\end{proof}
\
\

\textbf{Proof of Theorem \ref{th1.2}}. We firstly show that for $\epsilon$ small enough, there exists $\lambda'>0$ such that
\begin{equation}\label{4.14}
\begin{split}
&-\int_{\mathbb{R}\times \mathbb{S}^{1}}\overline{u} T_{a}u dwd\theta+\int_{\mathbb{R}\times \mathbb{S}^{1}}(|\partial_{w}u|^{2}+\lambda'|u|^{2})dwd\theta\\
\leq& \int_{\mathbb{R}\times \mathbb{S}^{1}}(|\partial_{w}u|^{2}+|(\partial_{\theta}-ia)u|^{2}+\lambda|u|^{2})dwd\theta,\;\; \;\;\forall u\in C_{0}^{\infty}(\mathbb{R}\times \mathbb{S}^{1}).
\end{split}
\end{equation}

Let $u(w,\theta)=\frac{1}{2\pi}\sum\limits_{n\in\mathbb{Z}}u_{n}(w)e^{in\theta}$, where $u_{n}(w)\in C_{0}^{\infty}(\mathbb{R})$. We compute
\begin{align*}
&\int_{\mathbb{R}\times \mathbb{S}^{1}}(|\partial_{w}u|^{2}+|(\partial_{\theta}-ia)u|^{2}+\lambda|u|^{2})dwd\theta+\int_{\mathbb{R}\times \mathbb{S}^{1}}T_{a}u \overline{u}dwd\theta-\int_{\mathbb{R}\times \mathbb{S}^{1}}(|\partial_{w}u|^{2}+\lambda'|u|^{2})dwd\theta\\
=&\sum_{n\in\mathbb{Z}}\left(|n-a|^{2}+\lambda-n^{2}+2a\frac{n^{2}}{\sqrt{n^{2}+\epsilon}}-\lambda'\right)\int_{\mathbb{R}}|u_{n}(w)|^{2}dw\\
=&\sum_{n\in\mathbb{Z}}\left(a^{2}+2a\frac{n^{2}}{\sqrt{n^{2}+\epsilon}}-2an+\lambda-\lambda'\right)\int_{\mathbb{R}}|u_{n}(w)|^{2}dw.
\end{align*}
If we choose   $0<\epsilon<\frac{1}{a}(\lambda+a^{2})$, then
\begin{align*}
\lambda+a^{2}+2a\min_{n\in\mathbb{Z}}\left(\frac{n^{2}}{\sqrt{n^{2}+\epsilon}}-n\right)
=&\lambda+a^{2}+2a\min_{n\geq1}\left(\frac{n^{2}}{\sqrt{n^{2}+\epsilon}}-n\right)\\
=&\lambda+a^{2}+2a\epsilon \min_{n\geq1}\frac{-n}{\sqrt{n^{2}+\epsilon}(n+\sqrt{n^{2}+\epsilon})}\\
>&\lambda+a^{2}+2a\epsilon \min_{n\geq1}\frac{-1}{2n}\\
=&\lambda+a^{2}-a\epsilon\\
 >&0.
\end{align*}
Therefore,  (\ref{4.14}) is valid for   $0<\lambda'<\lambda+a^{2}-a\epsilon$,  .

Substituting $w=\ln r$, we have, by (\ref{1.2}),
\begin{align*}
\int_{\mathbb{R}^{2}}|\nabla_{\mathbf{A}}u|^{2}dx+\lambda
\int_{\mathbb{R}^{2}}\frac{|u|^{2}}{|x|^{2}}dx=\int_{\mathbb{R}\times \mathbb{S}^{1}}(|\partial_{w}u|^{2}+|(\partial_{\theta}-ia)u|^{2}+\lambda|u|^{2})dwd\theta.
\end{align*}
By (\ref{4.14}) and Theorem \ref{th4.6}, we have that
there exists $C>0$ such that
\begin{align*}
\int_{\mathbb{R}^{2}}\frac{e^{4\pi|u|^{2}}-1}{|x|^{2}}dx\leq C
\end{align*}
for any complex-valued function $u\in C^{\infty}_0(\mathbb{R}^{2}\setminus\{0\})$ with
\begin{align*}
\int_{\mathbb{R}^{2}}|\nabla_{\mathbf{A}}u|^{2}dx+\lambda
\int_{\mathbb{R}^{2}}\frac{|u|^{2}}{|x|^{2}}dx\leq 1.
\end{align*}
The sharpness of the constant $4\pi$ can be verified by the process similar to that in
the proof of Theorem \ref{th3.1}. The proof of Theorem \ref{th1.2} ia thereby completed.\\
\

Finally, we shall prove Corollary \ref{co1.4}. The proof is similar to that given in \cite{ks}, Theorem 5.1.
\\
\

\textbf{Proof of Corollary \ref{co1.4}}.
Firstly, with the same argument as in \cite{ks},  we have
\begin{align}\label{4.16}
\liminf_{p\rightarrow\infty} p\mu_{p}(\lambda)\geq 8\pi e.
\end{align}

Nextly, we consider the test function
\begin{equation*}
v_{\delta}(w,\theta)=\frac{1}{\sqrt{-2\pi\ln\delta+(\lambda+a^{2})\pi\left(\frac{1}{2}-\frac{1}{2}\delta^{2}+\delta^{2}\ln\delta\right)}}u_{\delta}(w,\theta),
\end{equation*}
where $u_{\delta}$ defined in (\ref{3.3}). Then
\begin{align*}
\int_{\mathbb{R}^{2}}|\nabla_{\mathbf{A}}v|^{2}dx+\lambda
\int_{\mathbb{R}^{2}}\frac{|v|^{2}}{|x|^{2}}dx=&\int_{\mathbb{R}\times \mathbb{S}^{1}}(|\partial_{w}v|^{2}+|(\partial_{\theta}-ia)v|^{2}+\lambda|v|^{2})dwd\theta\\
=&\int_{\mathbb{R}\times \mathbb{S}^{1}}(|\partial_{w}v|^{2}+|\partial_{\theta}v|^{2}+(\lambda+a^{2})|v|^{2})dwd\theta\\
=&1
\end{align*}
and
\begin{align*}
\int_{\mathbb{R}^{2}}\frac{|v|^{p}}{|x|^{2}}dx\geq&\int_{\{(w,\theta): w^{2}+\theta^{2}<\delta^{2}\}}|v|^{p}dwd\theta\\
=&\left(\frac{-\ln\delta}{\sqrt{-2\pi\ln\delta+(\lambda+a^{2})\pi\left(\frac{1}{2}-\frac{1}{2}\delta^{2}+\delta^{2}\ln\delta\right)}}\right)^{p}\pi \delta^{2}.
\end{align*}
Therefore,
\begin{align*}
\mu_{p}(\lambda)\leq& \frac{\int_{\mathbb{R}^{2}}|\nabla_{\mathbf{A}}v|^{2}dx+\lambda
\int_{\mathbb{R}^{2}}\frac{|v|^{2}}{|x|^{2}}dx}{(\int_{\mathbb{R}^{2}}\frac{|v|^{p}}{|x|^{2}}dx)^{2/p}}\\
\leq&\frac{-2\pi\ln\delta+(\lambda+a^{2})\pi\left(\frac{1}{2}-\frac{1}{2}\delta^{2}+\delta^{2}\ln\delta\right)}{\ln^{2}\delta}(\pi \delta^{2})^{-2/p}.
\end{align*}
Choosing $\delta=e^{-p/4}$, we obtain
\begin{align*}
p\mu_{p}(\lambda)
\leq& 4\frac{\pi p/2+(\lambda+a^{2})\pi\left(\frac{1}{2}-\frac{1}{2}e^{-p/2}-pe^{p/2}/4\right)}{-p/4}\pi ^{-2/p}e\rightarrow 8\pi e,\; p\rightarrow\infty.
\end{align*}
Therefore,
\begin{equation}\label{4.17}
\limsup\limits_{p\rightarrow\infty} p\mu_{p}(\lambda)\leq 8\pi e.
\end{equation}
The desired result follows by combining (\ref{4.16}) and (\ref{4.17}).

\section{Proof of Theorem \ref{th1.3}}
Notice that
\begin{equation}\label{b5.1}
  \begin{split}
&\int_{\mathbb{B}^{2}}|\nabla_{\mathbf{A}}u|^{2}dx+\lambda
\int_{\mathbb{B}^{2}}\frac{|u|^{2}}{|x|^{2}}dx-\frac{1}{4}\int_{\mathbb{B}^{2}}\frac{|u|^{2}}{|x|^{2}\ln^{2}|x|}dx\\
=&\int_{0}^{\infty}\int_{\mathbb{S}^{1}}\left(|\partial_{w}u|^{2}-\frac{1}{4}\frac{1}{w^{2}}|u|^{2}+|(\partial_{\theta}-ia)u|^{2}+\lambda|u|^{2}\right)dwd\theta.
  \end{split}
\end{equation}
We need only  to establish Truding-Moser inequality for $-\partial_{w}^{2}-\frac{1}{4w^{2}}-T_{a}+\lambda$.

Recall that the heat kernel for $-\partial_{w}^{2}-\frac{1}{4w^{2}}$ is given by (see \cite{ben})
\begin{align*}
e^{t(\partial_{w}^{2}+\frac{1}{4w^{2}})}=\frac{\sqrt{ww'}}{4\pi t}\int_{0}^{2\pi}e^{-\frac{w^{2}+w'^{2}-2ww'\cos\vartheta}{4t}}d\vartheta,\;\; w>0,\;w'>0.
\end{align*}
We have the following asymptotic estimates of the fractional power:
\begin{lemma}\label{lm5.1}
Let $\lambda>0$ and set $\phi_{4}=(-\partial_{w}^{2}-\frac{1}{4w^{2}}-\Delta_{\mathbb{S}^{1}}+\lambda)^{-\frac{1}{2}}$.
There holds, for some $\delta_{5}>0$,
\begin{align*}
\phi_{4}\leq& \frac{1}{2\pi} \frac{1}{\sqrt{(w-w')^{2}+(\theta-\theta')^2}}+O(1),\;\;|w-w'|\leq 1,\; |\theta-\theta'|\leq \pi;\\
\phi_{4}\lesssim& e^{-\delta_{5}|w-w'|},\;\;\;\;\;\;\;\;\;\;\;\;\;\;\;\;\;\;\;\;\;\;\;\;\;\;\;\;\;\;\;\;\;\;\;\;\;
\;\;\;\;|w-w'|\geq 1,\; |\theta-\theta'|\leq \pi.
\end{align*}
\end{lemma}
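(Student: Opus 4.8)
The plan is to follow exactly the strategy already used for Lemma \ref{lm4.4}: write $\phi_4$ as a subordination integral of the heat kernel, split off the Euclidean-Laplacian-on-$\mathbb{R}$ part (whose kernel is explicit and carries the singular weight $1/(4w^2)$), and use the heat kernel bounds on $\mathbb{S}^1$ together with the elementary estimate that the kernel of $e^{t(\partial_w^2+\frac{1}{4w^2})}$ is dominated by the free heat kernel on $\mathbb{R}$. Concretely, first I would observe that the operator $-\partial_w^2-\frac{1}{4w^2}-\Delta_{\mathbb{S}^1}+\lambda$ on $(0,\infty)\times\mathbb{S}^1$ is the sum of commuting pieces, so that
\begin{align*}
e^{t(\partial_w^2+\frac{1}{4w^2}+\Delta_{\mathbb{S}^1}-\lambda)}=e^{-\lambda t}\,e^{t(\partial_w^2+\frac{1}{4w^2})}\otimes e^{t\Delta_{\mathbb{S}^1}},
\end{align*}
where the first factor is the explicit kernel recalled just before the lemma and the second is \eqref{2.13}. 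Then
\begin{align*}
\phi_4=\frac{1}{\Gamma(1/2)}\int_0^\infty t^{-1/2}e^{t(\partial_w^2+\frac{1}{4w^2}+\Delta_{\mathbb{S}^1}-\lambda)}\,dt.
\end{align*}

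The key point is the pointwise domination $e^{t(\partial_w^2+\frac{1}{4w^2})}(w,w')\le \frac{1}{\sqrt{4\pi t}}e^{-\frac{(w-w')^2}{4t}}$ for $w,w'>0$; this follows from the explicit formula
$$e^{t(\partial_w^2+\frac{1}{4w^2})}=\frac{\sqrt{ww'}}{4\pi t}\int_0^{2\pi}e^{-\frac{w^2+w'^2-2ww'\cos\vartheta}{4t}}d\vartheta$$
by comparing $w^2+w'^2-2ww'\cos\vartheta\ge (w-w')^2$ and bounding $\sqrt{ww'}\int_0^{2\pi}\cdots$ against the Gaussian — essentially the remark that the magnetic/Hardy perturbation of the half-line Laplacian is dominated by the whole-line heat kernel (a diamagnetic-type comparison). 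Granting this, the kernel of $e^{t(\partial_w^2+\frac{1}{4w^2}+\Delta_{\mathbb{S}^1}-\lambda)}$ is bounded above, up to the $e^{-\lambda t}$ factor, by the kernel of $e^{t(\partial_w^2+\Delta_{\mathbb{S}^1}-\lambda)}$, and therefore
$$\phi_4(w-w',\theta-\theta')\le \phi_1(w-w',\theta-\theta'),$$
where $\phi_1=(-\partial_w^2-\Delta_{\mathbb{S}^1}+\lambda)^{-1/2}$ is the kernel already estimated in \eqref{b3.1}--\eqref{b3.2}. Wait — one must be slightly careful: the variable $w$ now ranges over $(0,\infty)$ rather than $\mathbb{R}$, so the comparison is with the half-line object; but since the free heat kernel bound holds for all $w,w'>0$ and the subordination integral is monotone in the kernel, the resulting bound on $\phi_4$ is still dominated by the $\mathbb{R}\times\mathbb{S}^1$ kernel $\phi_1$ restricted to $w,w'>0$, which is what we want. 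The two asserted inequalities then follow directly from \eqref{b3.1} (with the $O(1)$ absorbing both $A_1$ and the lower-order contributions) for $|w-w'|\le 1$, and from \eqref{b3.2} (with any $0<\delta_5<\delta_1$) for $|w-w'|\ge 1$.

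The main obstacle I anticipate is making the pointwise kernel domination $e^{t(\partial_w^2+\frac{1}{4w^2})}(w,w')\le \frac{1}{\sqrt{4\pi t}}e^{-(w-w')^2/4t}$ fully rigorous, since the prefactor $\frac{\sqrt{ww'}}{4\pi t}\int_0^{2\pi}e^{-\frac{w^2+w'^2-2ww'\cos\vartheta}{4t}}d\vartheta$ is not obviously below the Gaussian for all ranges of $w,w',t$ — when $ww'$ is large and $t$ is comparable, the $\vartheta$-integral concentrates near $\vartheta=0$ and one needs the Laplace-type estimate $\int_0^{2\pi}e^{-\frac{2ww'(1-\cos\vartheta)}{4t}}d\vartheta\lesssim \sqrt{t/(ww')}$ to kill the $\sqrt{ww'}$ factor; this is exactly a stationary-phase/Laplace estimate and should go through, but it is the one computation requiring care. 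An alternative, cleaner route is to note $-\partial_w^2-\frac{1}{4w^2}\ge -\partial_w^2-\frac{1}{4w^2}\ge 0$ on $(0,\infty)$ and invoke the known heat-kernel estimate for the Bessel-type operator $-\partial_w^2-\frac{1}{4w^2}$ from \cite{ben} directly, thereby reducing to exactly the chain of inequalities carried out in the proof of Lemma \ref{lm4.4} with $K_0(2\sqrt{\lambda}|w-w'|)$ playing the role of the error term and \eqref{2.4}, \eqref{2.5} giving the claimed $O(1)$ near-diagonal and exponential off-diagonal behavior. Either way the lemma follows, and it is the last ingredient needed before running the level-set argument of Theorem \ref{th3.1}/\ref{th4.6} to prove Theorem \ref{th1.3}.
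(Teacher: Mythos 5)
Your proposed proof hinges on the pointwise heat-kernel domination
\begin{equation*}
e^{t(\partial_w^2+\frac{1}{4w^2})}(w,w')\le \frac{1}{\sqrt{4\pi t}}\,e^{-\frac{(w-w')^2}{4t}},\qquad w,w'>0,
\end{equation*}
and this inequality is false. Writing the Bessel heat kernel as $\frac{\sqrt{ww'}}{2t}e^{-\frac{w^2+w'^2}{4t}}I_0\bigl(\frac{ww'}{2t}\bigr)$ and setting $z=\frac{ww'}{2t}$, the ratio of the two sides is exactly $\sqrt{2\pi z}\,e^{-z}I_0(z)$. The large-$z$ asymptotics $I_0(z)\sim\frac{e^z}{\sqrt{2\pi z}}\bigl(1+\frac{1}{8z}+\cdots\bigr)$ show this ratio tends to $1$ from \emph{above}; already at $z=1$ one computes $\sqrt{2\pi}\,e^{-1}I_0(1)\approx 1.17>1$. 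This is unsurprising: $-\frac{1}{4w^2}$ is an attractive scalar potential, not a magnetic perturbation, so there is no diamagnetic-type comparison in the direction you want. You flagged this yourself as the ``main obstacle,'' but the fallback Laplace estimate $\int_0^{2\pi}e^{-\frac{2ww'(1-\cos\vartheta)}{4t}}d\vartheta\lesssim\sqrt{t/(ww')}$ only gives the ratio $\lesssim 1$ with a constant strictly larger than $1$, which ruins the coefficient $\frac{1}{2\pi}$ in the lemma — and that coefficient is precisely what produces the sharp constant $4\pi$ in Theorem~\ref{th1.3}, so it cannot be degraded.

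The paper avoids the comparison entirely. After subordination and the substitution $t\mapsto 1/t$, it uses \eqref{2.3} to reduce to a $\vartheta$-integral against $K_1$, bounds $K_1(z)\le z^{-1}$ via \eqref{2.7}, and then evaluates the resulting $\vartheta$-integral \emph{exactly} by a hypergeometric identity (\eqref{2.1}--\eqref{2.2}): the $n=0$ term becomes
\begin{equation*}
\pi\,\frac{\sqrt{ww'}}{\sqrt{(w+w')^2+(\theta-\theta')^2}}\cdot\frac{1}{\sqrt{(w-w')^2+(\theta-\theta')^2}},
\end{equation*}
and only \emph{then} is the elementary inequality $2\sqrt{ww'}\le w+w'\le\sqrt{(w+w')^2+(\theta-\theta')^2}$ applied to kill the $\sqrt{ww'}$ factor with the sharp coefficient $\frac{1}{2\pi}$ intact; the $n\ne 0$ terms give the $O(1)$ error and the exponential off-diagonal decay. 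Your ``alternative, cleaner route'' gestures at mimicking Lemma~\ref{lm4.4}, but there the $K_0$ error arises from the boundedness of $e^{tT_a}-e^{t\Delta_{\mathbb{S}^1}}$ (Corollary~\ref{co4.3}), and there is no analogous uniform bound on $e^{t(\partial_w^2+\frac{1}{4w^2})}-e^{t\partial_w^2}$ to invoke here. To repair the argument you would need to carry out the exact hypergeometric computation (or an equivalent sharp evaluation of the $\vartheta$-integral) as in the paper.
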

\begin{proof}
We have
\begin{align*}
\phi_{4}=&\frac{1}{\Gamma(1/2)}\int_{0}^{\infty}e^{-\lambda t}e^{t(\partial_{w}^{2}+\frac{1}{4w^{2}})}e^{t\Delta_{\mathbb{S}^{1}}}dt\\
=&\frac{1}{\Gamma(\frac{1}{2})(4\pi)^{3/2}}\sqrt{ww'}\int_{0}^{\infty}
\int_{0}^{\pi}t^{-2}e^{-\lambda t-\frac{w^{2}+w'^{2}-2ww'\cos\vartheta}{4t}}
 \sum_{n \in \Bbb{Z}} e^{- \frac{(\theta -\theta'- 2n \pi)^2}{4t}}dtd\vartheta\\
 =&\frac{1}{8\pi^{2}}\sqrt{ww'}\sum_{n \in \mathbb{Z}}\int_{0}^{\infty}\int_{0}^{2\pi}t^{-2}e^{-\lambda t-\frac{w^{2}+w'^{2}-2ww'\cos\vartheta+(\theta -\theta'- 2n \pi)^2}{4t}}
  dtd\vartheta.
\end{align*}
Substituting $t=\frac{1}{t}$ and using (\ref{2.3}), we obtain
\begin{equation}\label{c5.1}
  \begin{split}
\phi_{4}=&\frac{1}{8\pi^{2}}\sqrt{ww'}\sum_{n \in \mathbb{Z}}\int_{0}^{\infty}\int_{0}^{2\pi}e^{-\frac{\lambda}{t}-\frac{w^{2}+w'^{2}-2ww'\cos\vartheta+(\theta -\theta'- 2n \pi)^2}{4}t}
  dtd\vartheta\\
=&\frac{1}{8\pi^{2}}\sqrt{ww'} \sum_{n \in \mathbb{Z}}\int_{0}^{2\pi}2\left(\frac{4\lambda}{w^{2}+w'^{2}-2ww'\cos\vartheta+(\theta -\theta'- 2n \pi)^2}\right)^{\frac{1}{2}}\times\\
&K_{1}(\gamma^{1/2}\sqrt{w^{2}+w'^{2}-2ww'\cos\vartheta+(\theta-\theta'-2n\pi)^{2}})d\vartheta\\
=&\frac{\sqrt{\lambda}}{2\pi^{2}}\sqrt{ww'}\sum_{n \in \mathbb{Z}}\int_{0}^{2\pi}\frac{K_{1}(\gamma^{1/2}\sqrt{w^{2}+w'^{2}-2ww'\cos\vartheta+(\theta-\theta'-2n\pi)^{2}})}
{\sqrt{w^{2}+w'^{2}-2ww'\cos\vartheta+(\theta -\theta'- 2n \pi)^2}}d\vartheta\\
=&\frac{\sqrt{\lambda}}{\pi^{2}}\sqrt{ww'}\sum_{n \in \mathbb{Z}}\int_{0}^{\pi}\frac{K_{1}(\gamma^{1/2}\sqrt{w^{2}+w'^{2}-2ww'\cos\vartheta+(\theta-\theta'-2n\pi)^{2}})}
{\sqrt{w^{2}+w'^{2}-2ww'\cos\vartheta+(\theta -\theta'- 2n \pi)^2}}d\vartheta\\
=&: (I)+(II),
  \end{split}
\end{equation}
where
\begin{align*}
 (I)=&\sqrt{ww'}\int_{0}^{\pi}\frac{K_{1}(\gamma^{1/2}\sqrt{w^{2}+w'^{2}-2ww'\cos\vartheta+(\theta-\theta')^{2}})}
{\sqrt{w^{2}+w'^{2}-2ww'\cos\vartheta+(\theta-\theta')^{2}}}d\vartheta;\\
(II)=&\sqrt{ww'}\sum_{n \neq 0}\int_{0}^{\pi}\frac{K_{1}(\gamma^{1/2}\sqrt{w^{2}+w'^{2}-2ww'\cos\vartheta+(\theta-\theta'-2n\pi)^{2}})}
{\sqrt{w^{2}+w'^{2}-2ww'\cos\vartheta+(\theta -\theta'- 2n \pi)^2}}d\vartheta.
\end{align*}

We have, by (\ref{2.7})
\begin{align*}
(I)
\leq& \lambda^{-1/2}\sqrt{ww'}\int_{0}^{\pi}\frac{1}
{w^{2}+w'^{2}-2ww'\cos\vartheta+(\theta-\theta')^{2}}d\vartheta.
\end{align*}
Substituting $t=\frac{1+\cos\vartheta}{2}$ and using (\ref{2.1})-(\ref{2.2}), we get
\begin{equation}\label{5.1}
  \begin{split}
&\int_{0}^{\pi}\frac{\sqrt{ww'}}{w^{2}+w'^{2}-2ww'\cos\vartheta+(\theta-\theta')^{2}}d\vartheta\\
=&
\int_{0}^{1}\frac{\sqrt{ww'}}{w^{2}+w'^{2}-2ww'(2t-1)+(\theta-\theta')^{2}}(t(1-t))^{-1/2}dt\\
=& \frac{\sqrt{ww'}}{(w+w')^{2}+(\theta-\theta')^{2}}  \int_{0}^{1}\frac{1}{1-\frac{4ww'}{(w+w')^{2}+(\theta-\theta')^{2}}t}(t(1-t))^{-1/2}dt\\
=& \frac{\sqrt{ww'}}{(w+w')^{2}+(\theta-\theta')^{2}}\frac{\Gamma(\frac{1}{2})\Gamma(\frac{1}{2})}{\Gamma(1)}F\bigl(1,\frac{1}{2};1;\frac{4ww'}{(w+w')^{2}+(\theta-\theta')^{2}}\bigr)\\
=&\pi\frac{\sqrt{ww'}}{(w+w')^{2}+(\theta-\theta')^{2}}\left(1-\frac{4ww'}{(w+w')^{2}+(\theta-\theta')^{2}}\right)^{-\frac{1}{2}}
\times\\
&F\left(0,\frac{1}{2};1;\frac{4ww'}{(w+w')^{2}+(\theta-\theta')^{2}}\right)\\
=&\pi\frac{\sqrt{ww'}}{\sqrt{(w+w')^{2}+(\theta-\theta')^{2}}}\frac{1}{\sqrt{|w-w'|^{2}+(\theta-\theta')^{2}}}\\
\leq&\frac{\pi}{2}\frac{1}{\sqrt{|w-w'|^{2}+(\theta-\theta')^{2}}}.
  \end{split}
\end{equation}
To get the last inequality, we use the inequality $2\sqrt{ww'}\leq w+w'\leq \sqrt{(w+w')^{2}+(\theta-\theta')^{2}}$.
Therefore, we obtain
\begin{equation}\label{5.2}
  \begin{split}
(I)
\leq&\frac{1}{2\pi}\frac{1}{\sqrt{|w-w'|^{2}+(\theta-\theta')^{2}}},\;\; \;\;\;\;\;\;\;\;0<|\theta-\theta'|\leq \pi, \; w>0,\;w'>0.
  \end{split}
\end{equation}

On the other hand, if $|w-w'|>1$, then by (\ref{2.4}) and (\ref{5.1}),
\begin{equation}\label{5.3}
  \begin{split}
(I)
\lesssim & \sqrt{ww'}\int_{0}^{\pi}\frac{e^{-\gamma^{1/2}\sqrt{w^{2}+w'^{2}-2ww'\cos\vartheta+(\theta-\theta')^{2}}}}
{w^{2}+w'^{2}-2ww'\cos\vartheta+(\theta-\theta')^{2}}d\vartheta\\
\leq&e^{-\sqrt{\gamma((w-w')^{2}+(\theta-\theta')^{2})}}\int_{0}^{\pi}\frac{\sqrt{ww'}}
{w^{2}+w'^{2}-2ww'\cos\vartheta+(\theta-\theta')^{2}}d\vartheta\\
\leq& e^{-\sqrt{\gamma((w-w')^{2}+(\theta-\theta')^{2})}}\frac{1}{\sqrt{|w-w'|^{2}+(\theta-\theta')^{2}}}\\
\lesssim&e^{-\gamma^{1/2}|w-w'|}.
  \end{split}
\end{equation}

For $(II)$, we have,  $\forall w, w'>0$ and $|\theta-\theta'|\leq \pi$,
 \begin{align*}
 (II)
\lesssim& \sqrt{ww'}\sum_{n \neq 0}\int_{0}^{\pi}\frac{e^{-\gamma^{1/2}\sqrt{w^{2}+w'^{2}-2ww'\cos\vartheta+(\theta-\theta'-2n\pi)^{2}}}}
{w^{2}+w'^{2}-2ww'\cos\vartheta+(\theta -\theta'- 2n \pi)^2}d\vartheta\\
\leq&  \sum_{n \neq 0}e^{-\gamma^{1/2}\sqrt{|w-w'|^{2}+(\theta-\theta'-2n\pi)^{2}}}\int_{0}^{\pi}\frac{\sqrt{ww'}}
{w^{2}+w'^{2}-2ww'\cos\vartheta+(\theta -\theta'- 2n \pi)^2}d\vartheta\\
\leq&  \sum_{n \neq 0}e^{-\frac{\sqrt{\lambda}}{2}|w-w'|-\frac{\sqrt{\gamma}}{2}|\theta-\theta'-2n\pi|}\int_{0}^{\pi}\frac{\sqrt{ww'}}
{w^{2}+w'^{2}-2ww'\cos\vartheta+(\theta -\theta'- 2n \pi)^2}d\vartheta.
 \end{align*}
 To get the last equality, we use
 $$2\sqrt{|w-w'|^{2}+(\theta-\theta'-2n\pi)^{2}}\geq|w-w'|+|\theta-\theta'-2n\pi|.$$
Therefore, by using (\ref{5.1}), we have
\begin{equation}\label{5.4}
  \begin{split}
   (II)
\leq& e^{-\frac{\sqrt{\lambda}}{2}|w-w'|} \sum_{n \neq 0}e^{-\frac{\sqrt{\gamma}}{2}|\theta-\theta'-2n\pi|}\frac{1}{\sqrt{|w-w'|^{2}+(\theta-\theta'-2n\pi)^{2}}}\\
\lesssim& e^{-\frac{\sqrt{\lambda}}{2}|w-w'|},\;\;\;\;\;\;\;\;\;\;\; w>0,\;w'>0,\;\;|\theta-\theta'|\leq \pi.
  \end{split}
\end{equation}
The desired result follows by combing (\ref{c5.1}) and  (\ref{5.2})-(\ref{5.4}).

\end{proof}

With the same argument as in the proof of Lemma \ref{lm4.4}, we have the following lemma. Since the proof is similar, we omit it.
\begin{lemma}\label{lm5.3}
Let  $\delta_{6}>0$  and set $\phi_{5}=(-\partial_{w}^{2}-\frac{1}{4w^{2}}-T_{a}+\lambda)^{-\frac{1}{2}}$.
There holds, for some $\delta_{7}>0$,
\begin{align*}
|\phi_{5}|\leq& \frac{1}{2\pi} \frac{1}{\sqrt{(w-w')^{2}+(\theta-\theta')^2}}+O\left(\frac{1}{|w-w'|^{\delta_{6}}}\right),\;\;|w-w'|\leq 1,\; |\theta-\theta'|\leq \pi;\\
|\phi_{5}|\leq& e^{-\delta_{7}|w-w'|},\;\;\;\;\;\;\;\;\;\;\;\;\;\;\;\;\;\;\;\;\;\;\;\;\;\;\;\;\;\;\;\;\;\;\;\;\;\;\;\;\;\;\;\;\;\;\;\;\;\;\;\;\;\;
\;\;\;\;|w-w'|\geq 1,\; |\theta-\theta'|\leq \pi.
\end{align*}
\end{lemma}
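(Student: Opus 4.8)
The plan is to imitate the proof of Lemma \ref{lm4.4} line by line, the only structural change being that the free operator $-\partial_{w}^{2}$ is now replaced by $-\partial_{w}^{2}-\frac{1}{4w^{2}}$, whose heat kernel is the explicit one recalled just before Lemma \ref{lm5.1}. Since $-\partial_{w}^{2}-\frac{1}{4w^{2}}$ acts only on $w$ and $T_{a}$ acts only on $\theta$, the corresponding semigroups commute, so
\begin{equation*}
e^{t(\partial_{w}^{2}+\frac{1}{4w^{2}}+T_{a}-\lambda)}=e^{-\lambda t}\,e^{t(\partial_{w}^{2}+\frac{1}{4w^{2}})}\,e^{tT_{a}},\qquad t>0 .
\end{equation*}
Writing $\phi_{5}=\frac{1}{\Gamma(1/2)}\int_{0}^{\infty}t^{-1/2}e^{t(\partial_{w}^{2}+\frac{1}{4w^{2}}+T_{a}-\lambda)}\,dt$ and invoking Corollary \ref{co4.3} to replace $e^{tT_{a}}$ by $e^{t\Delta_{\mathbb{S}^{1}}}$ up to the additive constant $C=C(\epsilon,a)$, one obtains, exactly as the bound $|\phi_{2}|\le\phi_{1}+C'K_{0}(2\sqrt{\lambda}|w-w'|)$ was obtained in Lemma \ref{lm4.4},
\begin{equation*}
|\phi_{5}|\le \phi_{4}+C'\,(-\partial_{w}^{2}-\tfrac{1}{4w^{2}}+\lambda)^{-1/2}=:\phi_{4}+C'\psi .
\end{equation*}

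The term $\phi_{4}$ is already estimated by Lemma \ref{lm5.1}: it supplies the singular part $\frac{1}{2\pi}\big((w-w')^{2}+(\theta-\theta')^{2}\big)^{-1/2}$ for $|w-w'|\le1$ and the exponential bound $e^{-\delta_{5}|w-w'|}$ for $|w-w'|\ge1$. For $\psi$ I would plug in the explicit kernel $e^{t(\partial_{w}^{2}+\frac{1}{4w^{2}})}=\frac{\sqrt{ww'}}{4\pi t}\int_{0}^{2\pi}e^{-\frac{A(\vartheta)}{4t}}\,d\vartheta$ with $A(\vartheta)=w^{2}+w'^{2}-2ww'\cos\vartheta$, substitute $t\mapsto 1/t$, and apply \eqref{2.3} with $\nu=\tfrac12$ followed by \eqref{2.6}; this collapses the $t$-integral into
\begin{equation*}
\psi=\frac{\sqrt{ww'}}{2\pi}\int_{0}^{2\pi}\frac{e^{-\sqrt{\lambda A(\vartheta)}}}{\sqrt{A(\vartheta)}}\,d\vartheta .
\end{equation*}
Now $A(\vartheta)=(w-w')^{2}+4ww'\sin^{2}(\vartheta/2)\ge(w-w')^{2}+\frac{4ww'}{\pi^{2}}\vartheta^{2}$ for $0\le\vartheta\le\pi$ by Jordan's inequality, and $s\mapsto s^{-1/2}e^{-\sqrt{\lambda s}}$ is decreasing, so after the substitution $u=\frac{2\sqrt{ww'}}{\pi}\vartheta$ the spurious factor $\sqrt{ww'}$ cancels and one gets the $w,w'$-uniform bound $\psi\lesssim \int_{0}^{\infty}\frac{e^{-\sqrt{\lambda((w-w')^{2}+u^{2})}}}{\sqrt{(w-w')^{2}+u^{2}}}\,du$. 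Splitting this last integral at $u=1$ gives $\psi\lesssim\ln\frac{e}{|w-w'|}=O\big(|w-w'|^{-\delta_{6}}\big)$ for every $\delta_{6}>0$ when $|w-w'|\le1$, while using $\sqrt{(w-w')^{2}+u^{2}}\ge\frac{1}{\sqrt2}(|w-w'|+u)\ge1$ gives $\psi\lesssim e^{-\delta|w-w'|}$ when $|w-w'|\ge1$. Combining these with the bounds for $\phi_{4}$ from Lemma \ref{lm5.1} yields the two asserted inequalities.

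The step I expect to require the most care is precisely this uniform control of $\psi$ over all $w,w'>0$. In Lemma \ref{lm4.4} the correction term was the clean Macdonald kernel $K_{0}(2\sqrt{\lambda}|w-w'|)$, whose near-diagonal logarithmic blow-up is immediately read off from \eqref{2.4}; here the loss of translation invariance forces one to handle the angular integral $\int_{0}^{2\pi}\frac{e^{-\sqrt{\lambda A(\vartheta)}}}{\sqrt{A(\vartheta)}}\,d\vartheta$ both near the diagonal $w=w'$ and in the regime $w,w'\to\infty$ with $|w-w'|$ bounded, where the factor $\sqrt{ww'}$ must be absorbed by the rescaling above. Since this is exactly the mechanism already exploited through the inequality $2\sqrt{ww'}\le w+w'$ and the computations \eqref{5.1}--\eqref{5.4} in the proof of Lemma \ref{lm5.1}, no genuinely new estimate is needed, and the remaining details are routine.
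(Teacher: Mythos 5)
Your argument is correct and follows exactly the approach the paper intends: the paper omits the proof of Lemma \ref{lm5.3} with the remark that it is ``the same argument as in the proof of Lemma \ref{lm4.4},'' i.e.\ use Corollary \ref{co4.3} to write $|\phi_{5}|\le\phi_{4}+C\,(-\partial_{w}^{2}-\tfrac{1}{4w^{2}}+\lambda)^{-1/2}$ and then appeal to Lemma \ref{lm5.1} plus an estimate of the correction term. You correctly fill in the one nontrivial detail the paper leaves implicit, namely the uniform bound on $\psi=(-\partial_{w}^{2}-\tfrac{1}{4w^{2}}+\lambda)^{-1/2}$: your reduction via \eqref{2.3}, \eqref{2.6} and the rescaling $u=\tfrac{2\sqrt{ww'}}{\pi}\vartheta$ (absorbing the $\sqrt{ww'}$ factor) is sound and parallels the mechanism already used in the proof of Lemma \ref{lm5.1}.
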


Notice that  $\phi_{2}$ and $\phi_{5}$ have the same asymptotic estimates. By following closely the proof of Theorem \ref{th4.6}, we have the following theorem. 
\begin{theorem}\label{th5.3}
Let $\lambda>0$ and $0\leq a\leq\frac{1}{2}$. There exists $C>0$ such that
\begin{align*}
\int_{0}^{\infty}\int_{\mathbb{S}^{1}}(e^{4\pi|u|^{2}}-1)dwd\theta\leq C
\end{align*}
for any complex-valued function $u\in C^{\infty}_0((0,\infty)\times \mathbb{S}^{1})$ with
\begin{align*}
-\int_{0}^{\infty}\int_{\mathbb{S}^{1}}T_{a}u \overline{u}dwd\theta+\int_{0}^{\infty}
\int_{\mathbb{S}^{1}}\left(|\partial_{w}u|^{2}-\frac{1}{4w^{2}}|u|^{2}+\lambda|u|^{2}\right)dwd\theta\leq 1.
\end{align*}
\end{theorem}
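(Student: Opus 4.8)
The plan is to run the proof of Theorem~\ref{th4.6} essentially verbatim, with the kernel $\phi_{2}$ replaced by $\phi_{5}=(-\partial_{w}^{2}-\frac{1}{4w^{2}}-T_{a}+\lambda)^{-\frac{1}{2}}$ and Lemma~\ref{lm4.4} replaced by Lemma~\ref{lm5.3}. First I would record that the hypothesis forces a uniform $L^{2}$ bound: since $-\int_{0}^{\infty}\int_{\mathbb{S}^{1}}T_{a}u\,\overline{u}\,dwd\theta\geq 0$, and since the one-dimensional Hardy inequality $\int_{0}^{\infty}|\partial_{w}u|^{2}dw\geq\frac{1}{4}\int_{0}^{\infty}\frac{|u|^{2}}{w^{2}}dw$ (valid for $u\in C_{0}^{\infty}((0,\infty))$) makes $\int_{0}^{\infty}\int_{\mathbb{S}^{1}}\bigl(|\partial_{w}u|^{2}-\frac{1}{4w^{2}}|u|^{2}\bigr)dwd\theta\geq 0$, the constraint yields $\lambda\int_{0}^{\infty}\int_{\mathbb{S}^{1}}|u|^{2}dwd\theta\leq 1$. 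Setting $\Omega(u)=\{(w,\theta):|u(w,\theta)|\geq 1\}$, this gives $|\Omega(u)|\leq\lambda^{-1}$, exactly as in Theorems~\ref{th3.1} and \ref{th4.6}.

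Then I would split
\begin{equation*}
\int_{0}^{\infty}\!\int_{\mathbb{S}^{1}}(e^{4\pi|u|^{2}}-1)\,dwd\theta=\int_{\Omega(u)}(e^{4\pi|u|^{2}}-1)\,dwd\theta+\int_{((0,\infty)\times\mathbb{S}^{1})\setminus\Omega(u)}(e^{4\pi|u|^{2}}-1)\,dwd\theta.
\end{equation*}
On the complement $|u|<1$, so $e^{4\pi|u|^{2}}-1=\sum_{n\geq 2}\frac{(4\pi|u|^{2})^{n}}{n!}\leq\sum_{n\geq 1}\frac{(4\pi)^{n}}{n!}|u|^{2}$, and this term is bounded by $\lambda^{-1}\sum_{n\geq 1}\frac{(4\pi)^{n}}{n!}$. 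It therefore remains only to bound $\int_{\Omega(u)}e^{4\pi|u|^{2}}\,dwd\theta$ by a constant independent of $u$.

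For this, set $v=(-\partial_{w}^{2}-\frac{1}{4w^{2}}-T_{a}+\lambda)^{1/2}u$, so that $\int_{0}^{\infty}\int_{\mathbb{S}^{1}}|v|^{2}dwd\theta\leq 1$ and $|u(w,\theta)|\leq\int_{0}^{\infty}\int_{\mathbb{S}^{1}}|v(w',\theta')|\,\phi_{5}(w-w',\theta-\theta')\,dw'd\theta'$. From Lemma~\ref{lm5.3}, Lemma~\ref{lm4.5} and (\ref{2.9}), exactly as in (\ref{4.10})--(\ref{4.11}), one gets $\phi_{5}^{\ast\ast}(t)\leq\frac{1}{\sqrt{\pi t}}+O(t^{-\delta_{6}})$ for $0<t\leq 1$ and $\phi_{5}^{\ast}(t)\lesssim e^{-\delta_{7}t}$ for $t>1$, the coefficient $\frac{1}{\sqrt{4\pi t}}$ arising from the rearrangement of $\frac{1}{2\pi}\frac{1}{\sqrt{(w-w')^{2}+(\theta-\theta')^{2}}}$ on the strip. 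As in the proof of Theorem~\ref{th4.6}, I would pick $A_{3},A_{4}>0$ so that
\begin{equation*}
\varphi(t)=\begin{cases}\dfrac{1}{\sqrt{4\pi t}}+A_{3}t^{-\delta_{6}},& 0<t\leq 1,\\[2mm] A_{4}e^{-\delta_{7}t},& t>1,\end{cases}
\end{equation*}
is continuous on $(0,\infty)$ and satisfies $\phi_{5}^{\ast\ast}(t)\leq\frac{1}{t}\int_{0}^{t}\varphi(s)\,ds$ for all $t>0$; O'Neil's inequalities (\ref{2.10})--(\ref{2.11}) then give $|u|^{\ast}(t)\leq|u|^{\ast\ast}(t)\leq|v|^{\ast\ast}(t)\int_{0}^{t}\varphi(s)\,ds+\int_{t}^{\infty}|v|^{\ast}(s)\varphi(s)\,ds$. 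Choosing $0<\delta_{6}<\frac{1}{2}$ and following the argument of Theorem~1.7 in \cite{lly} (this is the point where the precise coefficient of $t^{-1/2}$ pins the admissible exponent to $4\pi$), one obtains $\int_{\Omega(u)}e^{4\pi|u|^{2}}dwd\theta=\int_{0}^{|\Omega(u)|}e^{4\pi(|u|^{\ast}(t))^{2}}dt\leq\int_{0}^{1/\lambda}e^{4\pi(|u|^{\ast}(t))^{2}}dt<C$ with $C$ independent of $u$. Adding the two pieces completes the proof.

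The main obstacle — that subtracting the critical Hardy weight $\frac{1}{4w^{2}}$ does not worsen the singularity of the resolvent kernel beyond $\frac{1}{2\pi}\frac{1}{\sqrt{(w-w')^{2}+(\theta-\theta')^{2}}}$, so that the sharp exponent stays $4\pi$ — has already been isolated as Lemma~\ref{lm5.3}, whose proof rests on the explicit heat kernel for $-\partial_{w}^{2}-\frac{1}{4w^{2}}$ from \cite{ben} together with the hypergeometric identities (\ref{2.1})--(\ref{2.2}) (as carried out for $\phi_{4}$ in Lemma~\ref{lm5.1}). Inside the proof of Theorem~\ref{th5.3} itself the only delicate points are the use of the one-dimensional Hardy inequality on $C_{0}^{\infty}((0,\infty))$ to guarantee nonnegativity of the operator and the bound $|\Omega(u)|\leq\lambda^{-1}$, and the check that the majorant $\varphi$ is sufficiently integrable near $0$ (which forces $\delta_{6}<\frac{1}{2}$) for the \cite{lly} exponential integrability estimate to apply.
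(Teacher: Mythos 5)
Your proposal follows exactly the paper's route: the paper simply notes that $\phi_{2}$ and $\phi_{5}$ share the same asymptotic estimates (Lemma~\ref{lm5.3}) and defers to the proof of Theorem~\ref{th4.6}, and your expansion of that reference is correct in every step. You also correctly supply the one point the paper leaves implicit — the one-dimensional Hardy inequality $\int_{0}^{\infty}|\partial_{w}u|^{2}\,dw\geq\frac{1}{4}\int_{0}^{\infty}\frac{|u|^{2}}{w^{2}}\,dw$ on $C_{0}^{\infty}((0,\infty))$, needed to ensure the constraint still yields $\lambda\int|u|^{2}\leq 1$ and hence $|\Omega(u)|\leq\lambda^{-1}$.
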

\
\
Now we can give the Proof of Theorem \ref{th1.3}. The proof is similar to that in the proof of Theorem \ref{th1.2}.
\\
\

\textbf{Proof of Theorem \ref{th1.3}}.
Let $\lambda'>0$ be such that for $\epsilon$ small enough,
\begin{equation*}
\begin{split}
&-\int_{0}^{\infty}\int_{\mathbb{S}^{1}}T_{a}u \overline{u}dwd\theta+\int_{0}^{\infty}\int_{\mathbb{S}^{1}}\left(|\partial_{w}u|^{2}-\frac{1}{4w^{2}}|u|^{2}+\lambda|u|^{2}\right)dwd\theta\\
\leq& \int_{0}^{\infty}\int_{\mathbb{S}^{1}}\left(|\partial_{w}u|^{2}-\frac{1}{4}\frac{1}{w^{2}}|u|^{2}+|(\partial_{\theta}-ia)u|^{2}+\lambda|u|^{2}\right)dwd\theta.
\end{split}
\end{equation*}
By Theorem \ref{th5.3} and (\ref{b5.1}), we have that there exists $C>0$ such that
\begin{align*}
\int_{0}^{\infty}\int_{\mathbb{S}^{1}}(e^{4\pi|u|^{2}}-1)dwd\theta\leq C
\end{align*}
for any complex-valued function $u\in C^{\infty}_0((0,\infty)\times \mathbb{S}^{1})$ with
\begin{align*}
&\int_{0}^{\infty}\int_{\mathbb{S}^{1}}\left(|\partial_{w}u|^{2}-\frac{1}{4}\frac{1}{w^{2}}|u|^{2}+|(\partial_{\theta}-ia)u|^{2}+\lambda|u|^{2}\right)dwd\theta\\
=&\int_{\mathbb{B}^{2}}|\nabla_{\mathbf{A}}u|^{2}dx+\lambda
\int_{\mathbb{B}^{2}}\frac{|u|^{2}}{|x|^{2}}dx-\frac{1}{4}\int_{\mathbb{B}^{2}}\frac{|u|^{2}}{|x|^{2}\ln^{2}|x|}dx\leq1.
\end{align*}
The sharpness of the constant $4\pi$ can be verified by the process similar to that in
the proof of Theorem \ref{th3.1}.
This completes the proof of Theorem \ref{th1.3}.

\end{document}